\newtheorem{theorem}{Theorem}[section]
\newtheorem{lemma}[theorem]{Lemma}
\newtheorem{proposition}[theorem]{Proposition}
\newtheorem{definition}[theorem]{Definition}
\newcommand{\ncom}{\newcommand}
\ncom{\rar}{\rightarrow}
\ncom{\lrar}{\longrightarrow}
\ncom{\ov}{\overline}
\ncom{\m}{\mbox}
\ncom{\sta}{\stackrel}
\ncom{\comx}{{\mathbb C}}
\ncom{\Z}{{\mathbb Z}}
\ncom{\Q}{{\mathbb Q}}
\ncom{\R}{{\mathbb R}}
\ncom{\G}{{\mathbb G}}
\ncom{\al}{\alpha}
\ncom{\p}{{\mathbb P}}
\ncom{\E}{{\mathbb E}}
\ncom{\N}{{\mathbb N}}
\ncom{\K}{{\mathbb K}}
\ncom{\Le}{{\mathbb L}}
\ncom{\A}{{\mathbb A}}
\ncom{\F}{{\mathbb F}}
\ncom{\f}{\frac}
\ncom{\cA}{{\mathcal A}}
\ncom{\cX}{{\mathcal X}}
\ncom{\cO}{{\mathcal O}}
\ncom{\cW}{{\mathcal W}}
\ncom{\cL}{{\mathcal L}}
\ncom{\cP}{{\mathcal P}}
\ncom{\cH}{{\mathcal H}}
\ncom{\cS}{{\mathcal S}}
\ncom{\cM}{{\mathcal M}}
\ncom{\cC}{{\mathcal C}}
\ncom{\cT}{{\mathcal T}}
\ncom{\cF}{{\mathcal F}}
\ncom{\cN}{{\mathcal N}}
\ncom{\cJ}{{\mathcal J}}
\ncom{\cV}{{\mathcal V}}
\ncom{\cZ}{{\mathcal Z}}
\ncom{\cU}{{\mathcal U}}
\ncom{\cSU}{{\mathcal S \mathcal U}}
\ncom{\cG}{{\mathcal G}}
\ncom{\cQ}{{\mathcal Q}}
\ncom{\cR}{{\mathcal R}}
\ncom{\eop}{{\hfill $\Box$}}
\begin{document}
\baselineskip=16pt

\title[]{Tautological ring of the moduli space of generalised parabolic line bundles on a curve}

\author[J. N. Iyer]{Jaya NN Iyer}

\address{The Institute of Mathematical Sciences, CIT
Campus, Taramani, Chennai 600113, India}

\email{jniyer@imsc.res.in}

\footnotetext{Mathematics Classification Number: 14C25, 14D05, 14D20, 14D21 }
\footnotetext{Keywords: Parabolic line bundles, Nodal curve, Chow groups.}
\begin{abstract}
In this paper, we consider the \textit{tautological ring} containing the extended Brill-Noether algebraic classes on the normalization of the compactified Jacobian of a complex nodal projective curve (with one node). This smallest $\Q$-subalgebra of algebraic classes under algebraic equivalence, stable under extensions of the maps induced by multiplication maps, Pontrayagin product and Fourier transform, is shown to be generated by pullback of the Brill-Noether classes of the Jacobian of the normalized curve and some natural classes.

\end{abstract}
\maketitle

\section{Introduction}


Suppose $X$ is a connected smooth projective curve of genus $g$ and defined over the complex numbers.
Let $D=x_1+x_2$ be an effective divisor on the curve $X$, such that the point $x_1$ is different from the point $x_2$. We recall the notion of generalised parabolic line bundles, due to U. Bhosle \cite{Bhosle}.  A generalised parabolic bundle of rank one on $(X,D)$ consists of the data (\cite[p.187]{Bhosle}): 

1) $E$ is a line bundle on $X$ 

2) $F^1(E)_D\subset E_{x_1}\oplus E_{x_2}$ is a rank one subspace of the direct sum of the fibres of $E$ at $x_1$ and $x_2$.

Th moduli space $P$ of generalised parabolic line bundles on $X$ is a smooth projective variety of dimension $g+1$ and is in fact a $\p^1$-bundle over the Jacobian variety $Jac(X)$, see \cite[Proposition 2.2]{Bhosle}.

U. Bhosle and A.J. Parameswaran \cite{BhosleParam} have defined natural subvarieties $\widetilde{W}_{i}\subset P$, for
$i=1,2,...,g$. These are defined as the Brill-Noether loci, similar to the naturally defined subvarieties $W_i\subset Jac(X)$, and they have proved a Poincar\'e formula in terms of the classes $\widetilde{W}_i$ in the group of algebraic cycles modulo numerical equivalence.

In this paper, we would like to understand the Poincar\'e relations in the rational ring of algebraic classes of $P$ modulo algebraic equivalence. More precisely, consider the group $A^k(P)$ of algebraic cycles of codimension $k$ on $P$, modulo algebraic equivalence and denote $A^k(P)_\Q:=A^k(P)\otimes \Q$. Then the direct sum $A^*(P)_\Q:=\oplus_{k\geq 0}A^k(P)_\Q$ is a commutative ring with the intersection product.
The first question that arises is whether the classical Poincar\'e formula for the classes $\widetilde{W}_i$,  holds in the ring $A^*(P)_\Q$. 

In this context, for the Jacobian $Jac(X)$ of a smooth projective curve $X$ of genus $g$, a result of Ceresa \cite{Ceresa} says that the Poincar\'e formula does not hold in the ring $A^*(Jac(X))_\Q$. To study other relations between the classes $W_i$, A. Beauville \cite{Beauville2} considered the tautological ring $\cR\subset A^*(Jac(X))_\Q$. The ring $\cR$ is the smallest $\Q$-subalgebra
containing the classes $W_i$, $1\leq i\leq g$, and stable under the pullback maps $\textbf{n}^*$, pushforward maps
$\textbf{n}_*$ and closed under the Pontryagin product $*$. Here $\textbf{n}:Jac(X)\rar Jac(X)$ is multiplication by the integer $n$, for any $n$. The Pontryagin product $*$ is defined in \cite{Beauville}, and we recall it in \S \ref{prelim}. He proved that the ring $\cR$ is in fact generated by the classes $W_i$, for $1\leq i\leq g$. 

To obtain similar results on $P$, we note that the multiplication maps, Pontryagin product and Fourier transform can be extended on the ring $A^*(P)_\Q$.
We consider the tautological ring $\widetilde{\cR}\subset A^*(P)_\Q$ containing the classes $\widetilde{W}_i, S_y, c_1(\cO_P(1))$ and the smallest $\Q$-subalgebra stable under natural analogues of $\textbf{n}^*,\textbf{n}_*$ and the Pontryagin product $*$, see \S \ref{extPP}, \ref{extFT}. Here $S_y$ is a section of the $\p^1$-bundle $\pi:P\rar Jac(X)$, defined in \S \ref{specialsection}. We show

\begin{theorem}
The $\Q$-algebra $\widetilde{\cR}$ is generated by $\pi^{-1}{W_i}$ and by $c_1(\cO_P(1))$.
\end{theorem}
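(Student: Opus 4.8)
\emph{Overview of the strategy.} The plan is to use the $\p^1$-bundle projection $\pi:P\rar Jac(X)$ to reduce the whole statement to Beauville's theorem on $\cR\subset A^*(Jac(X))_\Q$. Put $\xi:=c_1(\cO_P(1))$ and let $\mathcal{B}\subset A^*(P)_\Q$ be the $\Q$-subalgebra generated by $\xi$ and by the classes $\pi^{-1}W_i=\pi^*W_i$, $1\leq i\leq g$; the theorem then says $\widetilde{\cR}=\mathcal{B}$. First I would pin down the ring $A^*(P)_\Q$. By \cite[Proposition 2.2]{Bhosle} one has $P=\p(\mathcal{E})$ with $\mathcal{E}=\cP_{x_1}\oplus\cP_{x_2}$, where $\cP_{x_i}$ is the restriction to $Jac(X)\times\{x_i\}$ of a Poincar\'e line bundle on $Jac(X)\times X$. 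Normalising the Poincar\'e bundle along $Jac(X)\times\{x_1\}$, both summands lie in $Pic^0(Jac(X))$, hence are algebraically trivial, so $c_1(\mathcal{E})=c_2(\mathcal{E})=0$ in $A^*(Jac(X))_\Q$; the projective bundle formula, which is valid modulo algebraic equivalence, then gives an isomorphism of $A^*(Jac(X))_\Q$-algebras
\[
A^*(P)_\Q\ \cong\ A^*(Jac(X))_\Q[\xi]/(\xi^2).
\]
In particular $[S_y]$, being a section class, lies in $\Q\xi+\pi^*A^1(Jac(X))_\Q$, and the explicit construction of $S_y$ in \S\ref{specialsection} shows that its horizontal part is a rational multiple of $\theta=[W_{g-1}]$; hence $S_y\in\mathcal{B}$.

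\emph{The Brill--Noether classes and the two inclusions.} Over $P\times Y$, with $Y$ the one-nodal curve, there is a universal generalised parabolic sheaf; applying Thom--Porteous to its cohomology complex, $[\widetilde{W}_i]$ is a determinantal polynomial in the Chern classes of its derived pushforward to $P$. By Grothendieck--Riemann--Roch these Chern classes are built from $\xi$, the point classes $[x_1],[x_2]$ and $\mathrm{ch}$ of the Poincar\'e bundle on $Jac(X)\times X$; since the square $P\times X\to Jac(X)\times X$ over $\pi$ is cartesian, pushing forward to $P$ yields, because $\xi^2=0$, classes of the form $\pi^*\eta+\xi\,\pi^*\zeta$ with $\eta,\zeta$ in the subalgebra of $A^*(Jac(X))_\Q$ generated by the K\"unneth components of $\mathrm{ch}(\cP)$, i.e.\ in $\cR$ by \cite{Beauville2}. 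Hence $[\widetilde{W}_i]\in\pi^*\cR+\xi\,\pi^*\cR\subset\mathcal{B}$; comparison with the Poincar\'e-type formula of \cite{BhosleParam} (its image modulo numerical equivalence) identifies it as
\[
[\widetilde{W}_i]\ =\ \alpha_i\,\xi\,\pi^*W_i\ +\ \beta_i\,\pi^*W_{i-1},\qquad \alpha_i,\beta_i\in\Q,\ \beta_i\neq 0 .
\]
Together with $[S_y]\in\mathcal{B}$, $\xi\in\mathcal{B}$ and the stability checked below, this gives $\widetilde{\cR}\subseteq\mathcal{B}$. For the reverse inclusion, reading the displayed relation for $i=g,g-1,\dots,1$ and using $\xi^2=0$ (first $\xi\,\pi^*W_{i-1}=\beta_i^{-1}\,\xi[\widetilde{W}_i]$, then $\pi^*W_{i-1}=\beta_i^{-1}([\widetilde{W}_i]-\alpha_i\,\xi\,\pi^*W_i)$, starting from $\pi^*W_g=1$) shows $\pi^*W_i\in\widetilde{\cR}$ for every $i$, so $\mathcal{B}\subseteq\widetilde{\cR}$.

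\emph{Stability of $\mathcal{B}$.} It remains to verify that $\mathcal{B}$ is stable under the extended operations of \S\S\ref{extPP}--\ref{extFT}. On the summand $\pi^*\cR$ the extended maps $\mathbf{n}^*,\mathbf{n}_*$ are $\pi^*$ of the corresponding maps on $Jac(X)$ (the map $\mathbf{n}_P$ on $P$ covers multiplication by $n$ on $Jac(X)$), which preserve $\cR$ by \cite{Beauville2}; on $\xi$ an explicit computation gives, modulo algebraic equivalence, $\mathbf{n}^*\xi=\xi$ and $\mathbf{n}_*\xi\in\Q\xi$, because the twisting bundles $\cP_{x_i}^{\otimes n}$ are again algebraically trivial. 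The extended Pontryagin product and Fourier transform are constructed in \S\S\ref{extPP}--\ref{extFT} from the group law of $Jac(X)$ together with the section $S_y$ and the relative $\cO_P(1)$; rewriting them in the presentation $A^*(P)_\Q=A^*(Jac(X))_\Q[\xi]/(\xi^2)$ and using $\xi^2=0$, each of them sends $\pi^*\cR+\xi\,\pi^*\cR$ into itself, the point again being the stability of $\cR$. Combining with the previous step, $\widetilde{\cR}=\mathcal{B}=\Q[\pi^{-1}W_1,\dots,\pi^{-1}W_g,\xi]$.

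\emph{Where the difficulty lies.} The main obstacle is the stability step: one must make the extended Fourier transform and Pontryagin product explicit enough on $A^*(Jac(X))_\Q[\xi]/(\xi^2)$ to see that the $\xi$-direction never couples to the possibly non-tautological part of $A^1(Jac(X))_\Q$ lying outside $\cR$. The Thom--Porteous / Grothendieck--Riemann--Roch computation behind the formula for $[\widetilde{W}_i]$ is lengthy but routine; it is essentially the calculation of \cite{BhosleParam} carried out without passing to numerical equivalence, which is exactly what prevents the $W_i$ from collapsing to powers of $\theta$.
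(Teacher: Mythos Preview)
Your overall architecture is the same as the paper's: use the projective-bundle decomposition of $A^*(P)_\Q$, express the $\widetilde{W}_i$ in terms of $\pi^*W_j$ and the tautological class, check that the extended operations preserve this subalgebra, and invoke Beauville. Where you diverge is in how you get the formula for $[\widetilde{W}_i]$ and for $[S_y]$.

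The paper does not go through Thom--Porteous or Grothendieck--Riemann--Roch at all. It simply quotes the geometric decomposition from \cite{BhosleParam} (Lemma~\ref{genceresa} here), which already holds in the Chow group:
\[
\widetilde{W}_{g-d}\ =\ \pi^{-1}(W_{g-d})\cdot S_y\ +\ \pi^{-1}W_{g-d-1}.
\]
This is exactly your displayed formula with explicit coefficients, and it is proved in \cite{BhosleParam} by a direct geometric argument, not by determinantal calculus. Your proposed route---identify $\widetilde{W}_i$ as a degeneracy locus and apply Thom--Porteous/GRR---is not obviously legitimate here: $\widetilde{W}_d$ is defined as the \emph{closure of the image} of $\mathrm{Sym}^d(X_0)\to P$, and you would have to check that this coincides (with the right multiplicity) with the relevant degeneracy locus before Thom--Porteous gives its class. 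That is precisely the content of \cite{BhosleParam}, so you are re-proving a harder version of a lemma you could just cite.

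Similarly, your treatment of $S_y$ is more elaborate than needed. Since both summands $\cP_{x_1},\cP_{x_2}$ lie in $\mathrm{Pic}^0$, their $c_1$ vanishes in $A^1(Jac(X))_\Q$, so the section class is simply $[S_y]=\xi$; there is no ``horizontal part proportional to $\theta$'' to worry about. (This also reconciles the two versions of the statement in the paper: with or without $S_y$ among the generators.)

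For the stability step, the paper packages what you sketch into two compatibility lemmas: the extended $\mathbf{n}_*,\mathbf{n}^*$ and the extended Pontryagin product respect the splitting \eqref{PBformula}, and the extended Poincar\'e class is literally $(\pi\times\pi)^*\ell$ (Lemma~\ref{extendedPoincare}), so $\widetilde{\cF}$ acts through $\cF$ on each summand. Your concern that the $\xi$-direction might couple to non-tautological classes in $A^1$ therefore does not arise: the correspondence defining $\widetilde{\cF}$ is pulled back from the base, so no new $A^1$-classes enter. In short, your plan works, but the paper's proof is much shorter because the hard input---Lemma~\ref{genceresa} and the identification $\widetilde{\ell}=(\pi\times\pi)^*\ell$---is already available.
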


The proof is given in \S \ref{finaltheorem}, and depends on the structure of the classes $\widetilde{W_i}$ and action of the extended Pontryagin product and Fourier transform on the ring of algebraic classes.

{\Small
Acknowledgements: We thank A.J.Parameswaran for useful discussions on \cite{BhosleParam} in Dec 2009. Thanks are also due to the referee for the comments.
} 

\section{Preliminaries}\label{prelim}

In this section, we recall some of the basic properties of the group of algebraic classes on the Jacobian
of a smooth projective curve of genus $g$. 

Suppose $X$ is a smooth projective curve of genus $g$. It can be embedded into the Jacobian variety $Jac(X)$, fixing a base point on $X$. Using the group law on $Jac(X)$, we define certain natural subvarieties $W_i := X + X + \ldots + X (i $ times). These subvarieties are well-defined up to translation. The classical Poincare's formula gives
$$
W_i = \frac{1}{(g-i)!} \, \theta^{g-i}
$$
and we see that $W_{g-1}$ is the theta divisor $\theta$ on $Jac(X)$. 

A. Beauville \cite{Beauville2} studied the tautological subring $\cR \subset A^*(Jac(X))_\Q$ which is the smallest $\Q$-subalgebra containing the classes $W_i$, $1 \leq i \leq g$ and stable under the pullback maps $\textbf{n}^*$, pushforward maps $\textbf{n}_*$, closed under the intersection product and the Pontryagin product $*$. The group $A^*(Jac(X))_\Q$ is a $\Q$-vector space, and graded by the codimension of the cycle classes.
The addition (or multiplication) map $m: Jac(X)\times Jac(X)\rar Jac(X)$, $(x,y)\rar x+y$, induces the pushforward map $m_*$ on the group of cycles on $Jac(X)\times Jac(X)$. This helps us to define the Pontryagin product as below.

 The group $A^*(Jac(X)_\Q$ has two natural multiplication laws which are associative and commutative: namely the intersection product and the Pontryagin product defined by:
$$
x*y := m_*(p^*x \cdot q^*y). 
$$

Here $p^*,q^*: A^*(Jac(X))_\Q \rar A^*(Jac(X))_\Q \times A^*(Jac(X))_\Q$ are the maps induced by the  first and the second projections $Jac(X)\times Jac(X)\rar Jac(X)$, respectively. Note that $*$ is homogeneous of degree $-g$.

We will identify $Jac(X)$ with it's dual using the principal polarization on it. We define $W^{g-i}  \in A^{g-i}(Jac(X)) $ as the class of $W_i$ in $A^*(Jac(X))$.

\begin{lemma} 
There is a second graduation on $A^p(Jac(X)) =  \oplus_{s}A^p(Jac(X))_{(s)}$ such that
$$
\textbf{n}^*x = n^{2p-s}x \,\, , \,\, \textbf{n}_*x = n^{2g-2p+s}x
$$
for $x \in A^p(Jac(X))_{(s)}  $ and for all $ n \in \Z.$ 
Also, $A^p(Jac(X))_{(s)} \neq 0 $ only if $ s < p \leq g+s $. Both products are homogeneous with respect to the second graduation.
\end{lemma}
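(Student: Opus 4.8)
The plan is to derive this from the Fourier transform on the Chow ring of the abelian variety $J:=Jac(X)$; this is the Beauville decomposition, and I would either quote it from \cite{Beauville2} or reprove it as follows. Write $g=\dim J$ and identify $J$ with its dual $\widehat J$ via the principal polarisation. First I record the elementary relations $(\textbf{mn})^{*}=\textbf{m}^{*}\textbf{n}^{*}=\textbf{n}^{*}\textbf{m}^{*}$ and, since $\textbf{n}$ is finite of degree $n^{2g}$, $\textbf{n}_{*}\textbf{n}^{*}=\textbf{n}^{*}\textbf{n}_{*}=n^{2g}\cdot\mathrm{id}$ on $A^{*}(J)_{\Q}$; in particular the operators $\textbf{n}^{*}$ commute.

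The tool is the Fourier transform $\cF\colon A^{*}(J)_{\Q}\rar A^{*}(\widehat J)_{\Q}$, $\cF(x)=p_{2*}\bigl(p_{1}^{*}x\cdot\exp(\ell)\bigr)$, where $\ell=c_{1}(\cP)$ and $\cP$ is the normalised Poincar\'e bundle on $J\times\widehat J$. Decomposing by degree, $\cF=\sum_{j=0}^{2g}\cF^{(j)}$ with $\cF^{(j)}(x)=p_{2*}\bigl(p_{1}^{*}x\cdot\ell^{j}/j!\bigr)$, so $\cF^{(j)}$ carries $A^{p}(J)_{\Q}$ into $A^{p+j-g}(\widehat J)_{\Q}$. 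Two facts drive the argument. (i) From the bilinearity of the Poincar\'e pairing, $(\mathrm{id}_{J}\times\textbf{n})^{*}\cP\cong\cP^{\otimes n}$, hence $(\mathrm{id}_{J}\times\textbf{n})^{*}\ell=n\ell$; combined with the $\textbf{n}$-equivariance of $p_{2}$ (flat base change along $\textbf{n}_{\widehat J}$) this gives $\textbf{n}^{*}_{\widehat J}\circ\cF^{(j)}=n^{j}\,\cF^{(j)}$ for every $j$. (ii) $\cF$ is an isomorphism: $\cF_{\widehat J}\circ\cF_{J}=(-1)^{g}\,(-1_{J})^{*}$, Mukai's formula, which holds already at the level of cycles modulo algebraic equivalence. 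Establishing (ii) is the only non-formal input and is the step I expect to be the main obstacle; everything else is bookkeeping.

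Granting (i) and (ii): by surjectivity of $\cF$, any $y\in A^{q}(\widehat J)_{\Q}$ equals $\cF(x)$ for some $x=\sum_{p}x_{p}$, and comparing codimensions $y=\sum_{j}\cF^{(j)}(x_{q-j+g})$, where only $q\le j\le q+g$ contributes (as $A^{p}(J)=0$ for $p\notin[0,g]$) and, by (i), each summand is a common eigenvector of the $\textbf{n}^{*}$ with eigenvalue $n^{j}$. Thus $A^{q}(\widehat J)_{\Q}$ is spanned by simultaneous eigenvectors of the commuting family $\{\textbf{n}^{*}\}$, so with $A^{q}(\widehat J)_{(s)}:=\{x:\textbf{n}^{*}x=n^{2q-s}x\ \forall n\}$ we get $A^{q}(\widehat J)_{\Q}=\bigoplus_{s}A^{q}(\widehat J)_{(s)}$, and the eigenvalues $n^{j}=n^{2q-s}$ that occur satisfy $q\le j\le q+g$, i.e. $q-g\le s\le q$. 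On $A^{q}_{(s)}$ the operator $\textbf{n}^{*}$ ($n\neq0$) is multiplication by the nonzero scalar $n^{2q-s}$, whence $\textbf{n}_{*}=n^{2g}(\textbf{n}^{*})^{-1}=n^{2g-2q+s}$ there (an identity of polynomials, so valid at $n=0$ too). Transporting through the principal polarisation yields the same statements for $J$.

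It remains to check the products and to sharpen $s\le q$ to $s<q$. Homogeneity of the intersection product is immediate from $\textbf{n}^{*}(x\cdot y)=\textbf{n}^{*}x\cdot\textbf{n}^{*}y$, giving $A^{p}_{(s)}\cdot A^{q}_{(t)}\subset A^{p+q}_{(s+t)}$; homogeneity of the Pontryagin product follows from $\textbf{n}_{*}(x*y)=\textbf{n}_{*}x*\textbf{n}_{*}y$ (a consequence of $m\circ(\textbf{n}\times\textbf{n})=\textbf{n}\circ m$ and of $(\textbf{n}\times\textbf{n})_{*}$ being multiplicative on exterior products), giving $A^{p}_{(s)}*A^{q}_{(t)}\subset A^{p+q-g}_{(s+t)}$. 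Finally, for $q\ge1$ the extreme eigenspace $A^{q}(\widehat J)_{(q)}$ (eigenvalue $n^{q}$, i.e. $j=q$) is spanned by $\cF^{(q)}(A^{g}(J)_{\Q})$; but modulo algebraic equivalence $A^{g}(J)_{\Q}=\Q\cdot[\mathrm{pt}]$ and $\cF^{(q)}([\mathrm{pt}])=\tfrac1{q!}\bigl(\ell|_{\{\mathrm{pt}\}\times\widehat J}\bigr)^{q}=0$ for $q\ge1$, since the restriction of $\ell$ to a slice lies in $\mathrm{Pic}^{0}(\widehat J)$, which vanishes in $A^{1}(\widehat J)_{\Q}$. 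Hence $A^{q}_{(q)}=0$ for $q\ge1$, so the admissible range becomes $q-g\le s<q$, that is $s<p\le g+s$ with $p=q\ge1$, as claimed.
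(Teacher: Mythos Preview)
The paper does not give a proof here; it simply cites \cite[Propositions~1 and~4]{Beauville}. Your write-up is essentially a faithful reconstruction of Beauville's own argument from that reference---the Fourier decomposition $\cF=\sum_j\cF^{(j)}$, the compatibility $\textbf{n}^{*}\circ\cF^{(j)}=n^{j}\cF^{(j)}$, Mukai's inversion to get surjectivity, and then reading off the eigenspace decomposition---and it is correct. The extra step you supply to sharpen $s\le p$ to $s<p$ (killing $A^{q}_{(q)}$ for $q\ge 1$ via $\cF^{(q)}([\mathrm{pt}])=0$ because the slice restriction of $\ell$ lies in $\mathrm{Pic}^{0}$, hence vanishes modulo algebraic equivalence) is exactly the point where working modulo algebraic equivalence, rather than rational equivalence, is used, and your handling of it is fine.

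Two minor remarks, neither fatal. First, the identity $\textbf{n}^{*}\textbf{n}_{*}=n^{2g}\cdot\mathrm{id}$ is not automatic from finiteness alone; it holds here because translations by $n$-torsion points act trivially modulo algebraic equivalence---you do not actually use this direction, so you could drop it. Second, the sentence ``an identity of polynomials, so valid at $n=0$ too'' is a bit glib: the operators $\textbf{n}_{*}$ are not literally the values of a polynomial map in $n$, so the $n=0$ case should be checked directly (it is immediate, since $\textbf{0}_{*}$ kills everything in positive dimension and the exponent $2g-2p+s$ is positive there).
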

\begin{proof}
 See \cite[Proposition 1 and Proposition 4]{Beauville}.
\end{proof}

We recall some of the properties of the Fourier transform for algebraic cycles on the Jacobian $Jac(X)$.

Let 
\begin{equation}\label{Poincareclass}
\ell := p^*\theta + q^*\theta - m^*\theta \in A^1(Jac(X) \times Jac(X))
\end{equation}
 which is the class of \emph{Poincare line bundle $\cL$} on $Jac(X) \times Jac(X)$. Fourier transform $\cF: A^*(Jac(X)) \rightarrow A^*(Jac(X)) $ defined by $\cF x = q_*(p^*x \cdotp e^{\ell}) $ satisfies following properties:

\subsection{}\label{Property2.1} $\cF \circ \cF = (-1)^g(-1)^*$
\subsection{}\label{Property2.2} $\cF(x * y) = \cF x \cdotp \cF y$ and $\cF(x \cdotp y) = (-1)^g \cF x * \cF y$ 
\subsection{}\label{Beauvillefourier} $\cF A^p(X)_{(s)} = A^{g-p+s}(X)_{(s)}$ (see \cite[Proposition 1]{Beauville}).

The main theorem in \cite{Beauville2} is:

\begin{theorem}\label{Beauville} 
$\cR$ is the $\Q$-subalgebra of $A^*(Jac(X))_\Q$ generated by the algebraic classes $W^1, W^2, \ldots ,W^{g-1}$.
\end{theorem}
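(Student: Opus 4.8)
The plan is to prove the equivalent assertion that $\cR$ coincides with $\cR_0$, the $\Q$-subalgebra of $A^*(Jac(X))_\Q$ generated \emph{under the intersection product alone} by $W^1,\dots,W^{g-1}$. No generator is lost by dropping $W^g$, since a codimension-$g$ cycle is a $0$-cycle and $A^g(Jac(X))_\Q\cong\Q$ via the degree, so that $W^g=\frac{1}{g!}(W^1)^g$. Because $\cR$ is by definition stable under $\textbf{n}^*$, $\textbf{n}_*$ and closed under the Pontryagin product, the inclusion $\cR_0\subseteq\cR$ is immediate, and the whole content is the reverse inclusion: it suffices to check that $\cR_0$ already enjoys all three of these stability properties, for then $\cR_0$ is one of the subalgebras over which $\cR$ is the intersection.

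First I would clear away the maps $\textbf{n}^*,\textbf{n}_*$. By the Lemma on the second graduation, $\textbf{n}^*$ acts on $A^p(Jac(X))_{(s)}$ through the scalar $n^{2p-s}$, so a $\Q$-subspace of $A^*(Jac(X))_\Q$ is stable under all $\textbf{n}^*$ (equivalently, under all $\textbf{n}_*$) exactly when it is homogeneous for the second grading. Since both products respect that grading, it then suffices to prove that the bigraded components of each $W^j$ themselves lie in $\cR_0$: the components of a product of $W^j$'s are sums of products of those components. The layer $s=0$ is free, since on the $(0)$-graded part Poincar\'e's formula holds, whence $(W^j)_{(0)}=\theta^j/j!=(W^1)^j/j!$; the higher layers are where the argument acquires content, and this, together with closure under the Pontryagin product, is the heart of the matter.

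The instrument for the remaining work is the Fourier transform, via the properties \S\ref{Property2.1}--\S\ref{Beauvillefourier}: $\cF$ interchanges the two products up to the sign $(-1)^g$, preserves the decomposition into $(s)$-graded pieces, and satisfies $\cF\circ\cF=(-1)^g(-1)^*$ with $(-1)^*$ acting on $A^p(Jac(X))_{(s)}$ by $(-1)^s$. For a \emph{bigraded} subspace $B$ this yields: $B$ is closed under $*$ if and only if $\cF(B)$ is closed under the intersection product, and $B$ is then also stable under $\cF^{-1}=(-1)^g(-1)^*\cF$. The essential geometric input is the identity $[W_i]=\frac{1}{i!}\,[X]^{*i}$, valid because the sum map $X^i\to W_i$ is generically $i!$-to-one for $i\le g$; transforming it and using \S\ref{Property2.2} turns it into $\cF([W_i])=\frac{1}{i!}\,\psi^{i}$, an intersection power of the \emph{single} class $\psi:=\cF([X])$, whose bigraded components satisfy $\psi_s\in A^{s+1}(Jac(X))_{(s)}$ with $\psi_0=\cF\!\left(\theta^{g-1}/(g-1)!\right)=-\theta$ (Poincar\'e's formula on the $(0)$-part). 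Thus $\cF$ identifies $\cR_0$ with the Pontryagin subalgebra generated by $\psi$, and the whole problem is repackaged around $\psi$ and its finitely many components $\psi_0,\psi_1,\dots$.

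The main obstacle is carrying out the last step cleanly, because the three properties one wants for $\cR_0$ — bigradedness, $\cF$-stability, and closure under $*$ — depend on one another in a circular-looking way (for instance $*$-closure follows from bigradedness plus $\cF$-stability, but $\cF$ of an intersection monomial in the $W^j$ is a $*$-monomial in powers of $\psi$). The way through is an induction on the second-grading index $s$: the base case $s=0$ is Poincar\'e's formula; assuming the layers of index $<s$ of everything in $\cR_0$ lie back in $\cR_0$, one expands a Pontryagin product of intersection monomials, applies $\cF$ to convert it to an intersection product, isolates its $(s)$-homogeneous part, and subtracts the lower-layer contributions (already in $\cR_0$ by the inductive hypothesis) to exhibit the new class of index $s$ as an intersection polynomial in $W^1,\dots,W^{g-1}$. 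This is precisely the point where the full list $W^1,\dots,W^{g-1}$ is indispensable — the $s=0$ layer only recovers powers of $\theta=W^1$, and one must solve a triangular system over all the $W^j$ to peel off successive layers — and where the geometry packaged in $[W_i]=\frac{1}{i!}[X]^{*i}$ must be combined with the bookkeeping furnished by intersection with $\theta$, Pontryagin product with $\theta$, and the grading operator (which together act as a copy of $sl_2$). Once $\cR_0$ is known to be bigraded and closed under $*$, it contains all generators of $\cR$ with all the required stabilities, so $\cR\subseteq\cR_0$ and the two rings coincide.
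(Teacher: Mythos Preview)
The paper supplies no proof of this statement: it is quoted verbatim as ``the main theorem in \cite{Beauville2}'' and used only as a black box in \S\ref{finaltheorem}. There is thus no argument in the paper to compare your proposal against.

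What you have written is a recognisable outline of Beauville's own proof in \cite{Beauville2}, with the right ingredients in place: the bigrading and the reduction of $\textbf{n}^*,\textbf{n}_*$-stability to homogeneity; the identity $[W_i]=\tfrac{1}{i!}[X]^{*i}$ and its Fourier transform $\cF(W^{g-i})=\tfrac{1}{i!}\psi^i$ with $\psi=\cF([X])$; and the $sl_2$-triple built from $\cdot\,\theta$, $*\,\theta$, and the grading operator. The gap is exactly the one you label the ``main obstacle'': establishing that each component $\psi_s$ (equivalently, each bigraded piece $(W^j)_{(s)}$) already lies in the intersection algebra $\cR_0$. Your proposed induction on $s$ --- expand, apply $\cF$, subtract lower layers --- does not by itself break the circularity you correctly identify, because at each stage you need $\cF$-stability to produce new elements of $\cR_0$, yet $\cF$-stability is what you are trying to prove. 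In \cite{Beauville2} this is resolved not by a bare induction but by an explicit computation: the $sl_2$-relations give concrete triangular identities expressing the $\psi_s$ as intersection polynomials in $W^1,\dots,W^{g-1}$ (and conversely), and it is this calculation that does the work. As written, your proposal is a correct plan with the decisive step left as a promise; for the purposes of the present paper, a citation to \cite{Beauville2} is all that is required.
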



\section{Algebraic cycles on the moduli space of generalised parabolic line bundles on a curve}
Suppose $X$ is a connected nodal curve, with a single node at $x\in X$. The Jacobian $Jac(X)$ of $X$ is a non-compact variety, namely an extension of the Jacobian of the normalized curve by $\comx^*$. Oda and Seshadri \cite{OdaSeshadri} defined a compactification $\overline{Jac(X)}$ of $Jac(X)$ by adding rank one torsion free sheaves on $X$. However, this is a singular variety and hence it is difficult to understand algebraic classes of naturally defined subvarieties. So it is convenient to look at good compactifications of $Jac(X)$ and try to define extensions suitably. For this purpose, we recall U.Bhosle's work \cite{Bhosle} on rank one Parabolic sheaves and subsequent relations. 

\subsection{Generalised Parabolic line bundles on a curve}

Suppose $X$ is a connected smooth projective curve of genus $g$. U. Bhosle \cite{Bhosle} defined the notion of a generalised parabolic bundle on a smooth curve. This is relevant to the study of torsion free rank one sheaves on  nodal curves, as we will see below.

Fix an effective divisor $D$ on $X$, such that the points are distinct. For the sake of simplicity, in this paper, we will assume that $D=x_1+x_2$, $x_1\neq x_2$.

A generalised parabolic line bundle on $(X,D)$ is the data:

1) $E$ is a line bundle on $X$.

2) $F^1(E)_D\subset E_{x_1}\oplus E_{x_2}$ is a rank  one vector subspace. Here $E_{x_i}$ denotes the fibre of $E$ at  the point $x_i$.

A generalised parabolic line bundle as above will be denoted by the pair $(E,F^1(E)_D)$.

We have the following result:
\begin{proposition}
 The moduli space of generalised parabolic line bundles on $(X,D)$, where $D=x_1+x_2$, $x_1\neq x_2$, is a smooth projective variety. It is in fact a $\p^1$-bundle over the Jacobian of the normalized curve.
\end{proposition}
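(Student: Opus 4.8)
The plan is to build the moduli space by hand as a projective bundle over $Jac(X)$ and then identify it with Bhosle's moduli space. Since $X$ is smooth, its normalization is $X$ itself, so the target Jacobian is $Jac(X)$. The underlying line bundle of a generalised parabolic line bundle has a well-defined degree, locally constant in families, so we may fix a degree $d$ and write $Jac^d(X)$ for the component of $\mathrm{Pic}(X)$ parametrizing degree-$d$ line bundles; the construction below is the same for every $d$.

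First I would recall that there is a Poincar\'e line bundle $\cP$ on $X\times Jac^d(X)$, universal for families of degree-$d$ line bundles and unique up to tensoring by the pullback of a line bundle from $Jac^d(X)$. Let $\iota_j\colon Jac^d(X)\rar X\times Jac^d(X)$, $L\mapsto(x_j,L)$, for $j=1,2$, and set $L_j:=\iota_j^*\cP$, a line bundle on $Jac^d(X)$ with fibre $E_{x_j}$ over the point $[E]$. Put $\cV:=L_1\oplus L_2$, a rank-two bundle on $Jac^d(X)$, and let $\pi\colon P:=\p(\cV)\rar Jac^d(X)$ be the associated bundle of lines in $\cV$, equipped with the tautological inclusion $\cO_P(-1)\hookrightarrow\pi^*\cV$. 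Replacing $\cP$ by a twist $\cP\otimes\mathrm{pr}_2^*M$ replaces $\cV$ by $\cV\otimes M$, and $\p(\cV\otimes M)\cong\p(\cV)$ canonically, so $P$ does not depend on the choice of Poincar\'e bundle. Since $Jac^d(X)$ is a smooth projective abelian variety of dimension $g$ and $P$ is a $\p^1$-bundle over it, $P$ is a smooth projective variety of dimension $g+1$.

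Next I would exhibit the tautological family and compare moduli. The fibre of $P$ over $[E]$ is $\p(E_{x_1}\oplus E_{x_2})$, whose points are exactly the rank-one subspaces $F^1(E)_D\subset E_{x_1}\oplus E_{x_2}$, so the closed points of $P$ are in bijection with isomorphism classes of degree-$d$ generalised parabolic line bundles on $(X,D)$. For the family: pull $\cP$ back to $X\times P$ and equip it, along $\{x_j\}\times P$, with the $j$-th component of $\cO_P(-1)\hookrightarrow\pi^*(L_1\oplus L_2)$; this is a flat family of generalised parabolic line bundles over $P$ whose fibrewise classifying map is the identity. It then remains to check that this family realises $P$ as the moduli space of generalised parabolic line bundles --- which is exactly the content of \cite[Proposition 2.2]{Bhosle}, whose GIT argument I would invoke for this step.

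The one step that is not formal is this final comparison: that the moduli space of generalised parabolic line bundles genuinely coincides with the explicit bundle $\p(\cV)$ rather than being merely in set-theoretic bijection with it. For rank one there are no stability subtleties, and the matching is carried out in \cite{Bhosle}; the remaining points --- flatness of the tautological family, smoothness, projectivity and dimension of $P$, the $\p^1$-bundle structure, and independence of the Poincar\'e twist --- are routine and I would only sketch them.
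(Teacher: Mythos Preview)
Your proposal is correct. The paper's own proof is nothing more than the citation ``See \cite[Proposition 2.2]{Bhosle}'', and your explicit construction of $P$ as $\p(L_1\oplus L_2)$ over $Jac(X)$ via restrictions of a Poincar\'e bundle is exactly the argument contained in that reference, so you have simply unpacked what the paper defers to.
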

\begin{proof}
 See \cite[Proposition 2.2]{Bhosle}.
\end{proof}

\subsection{Relationship with torsion free sheaves on a nodal curve}\label{specialsection}

Suppose $X$ is an irreducible nodal curve of arithmetic genus $g+1$. Denote the nodal point by $x\in X$. Consider the desingularisation $p:X'\rar X$ of the curve $X$. Let $p^{-1}(x)=\{y,z\}$, where $y,z\in X'$ are smooth points. Consider the effective divisor $D:=y+z$ on the smooth projective curve $X'$ of genus $g$.

Firstly, we note that the Jacobian variety $Jac(X)$ of $X$ is a smooth group variety. In fact, it can be expressed as a central extension:
$$
1\rar \comx^* \rar Jac(X) \sta{p^*}{\rar} Jac(X')\rar 0. 
$$
Here $p^*$ is the morphism defined by the pullback of line bundles via $p:X'\rar X$.
In particular, the group variety $Jac(X)$ is not a compact variety. Oda and Seshadri \cite{OdaSeshadri} defined a natural compactification $\ov{Jac(X)}$ of $Jac(X)$. The variety $\ov{Jac(X)}$ is constructed as the moduli space of torsion free rank one sheaves on the curve $X$. However, the variety $\ov{Jac(X)}$
is not a smooth variety.
It was noticed in \cite{Bhosle}, that there is a morphism
$$
h: P\rar \ov{Jac(X)}.
$$
It is defined as follows.
Given a generalised parabolic data $(E,F^1(E)_D))$ on $X'$, consider
the exact sequence of sheaves on $X$: 
$$
0\rar F \rar p_*(E) \sta{q}{\rar} p_*(\frac{E_{y}\oplus E_{z}}{F^1(E)_D}) \rar 0.
$$
The term at the right end of the exact sequence is a skyscraper sheaf supported at the nodal point $x$.
Then the sheaf $F$ is defined as the kernel of the natural restriction map, and it is a torsion free rank one sheaf on $X$.
Furthermore, $F$ is locally free as long as the map
$q$ is not the projection to either $E_{y}$ or $E_{z}$. In other words, $F$ is locally free if and only if
$F^1(E)_D\neq E_{z} \m{ or } E_{y}$, see \cite[Proposition 1.8]{Bhosle}, \cite[Lemma 2.2]{BhosleParam}.

\begin{proposition}
There is a natural inclusion of the Jacobian $J(X)\subset P$ and the morphism $h$ restricts to an isomorphism on $J(X)$.  
\end{proposition}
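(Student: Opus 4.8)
The plan is to identify the (non-compact) Jacobian $Jac(X)$ of the nodal curve $X$ with the open subset of $P$ on which the sheaf $F$ of \S\ref{specialsection} is locally free, and then to check that $h$ realizes this identification. Write $\Sigma_y,\Sigma_z\subset P$ for the two sections of $\pi\colon P\rar Jac(X')$ cut out by $F^1(E)_D=E_y$ and $F^1(E)_D=E_z$, and set $P^\circ:=P\setminus(\Sigma_y\cup\Sigma_z)$; by \cite[Proposition 1.8]{Bhosle} (quoted in \S\ref{specialsection}) this is exactly the locus where $F$ is a line bundle, so $h(P^\circ)\subset Jac(X)\subset\ov{Jac(X)}$. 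Recall next the descent description of line bundles on $X$: since $X$ is obtained from $X'$ by identifying $y$ with $z$, a line bundle $L$ on $X$ is the same datum as a line bundle $E$ on $X'$ together with an isomorphism $\phi\colon E_y\sta{\sim}{\rar}E_z$ of fibres, namely $E=p^*L$ and $\phi$ the canonical identification $(p^*L)_y=L_x=(p^*L)_z$. The graph $\Gamma_\phi\subset E_y\oplus E_z$ of such a $\phi$ is a line distinct from the coordinate lines $E_y$ and $E_z$, and conversely a line in $E_y\oplus E_z$ other than these two projects isomorphically onto each factor, hence is the graph of a unique isomorphism $E_y\rar E_z$. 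Therefore $L\mapsto(p^*L,\Gamma_\phi)$ is a bijection of $Jac(X)$ onto the set of closed points of $P^\circ$.

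Next I would note that this bijection is a morphism $\iota\colon Jac(X)\rar P^\circ$: the assignment $L\mapsto(p^*L,\Gamma_\phi)$ is functorial in families (pull back a family of line bundles on $X$ along $p\times\m{id}$ and equip it with the tautological gluing along $y$ and $z$), so it induces a morphism to the moduli space $P$ landing in $P^\circ$. Equivalently, over $P^\circ$ the universal rank-one subbundle is the graph of an isomorphism of line bundles, which exhibits $P^\circ$ as a $\comx^*$-bundle over $Jac(X')$, and this is precisely the $\comx^*$-torsor underlying the extension $1\rar\comx^*\rar Jac(X)\sta{p^*}{\rar}Jac(X')\rar 0$.

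The remaining point is that $h\circ\iota=\m{id}_{Jac(X)}$, i.e.\ that the sheaf $F$ produced from $(p^*L,\Gamma_\phi)$ is $L$ itself; this is a local computation at the node. Away from $x$ the map $p$ is an isomorphism and $F=p_*p^*L=L$ there. Near $x$ one takes $X=\m{Spec}\,\comx[[u,v]]/(uv)$ with $X'$ the two branches, trivializes $L$, so that $p_*p^*L$ has local sections the pairs $(f(u),g(v))$ and the two fibres are $(p^*L)_y=\comx=(p^*L)_z$ via $f\mapsto f(0)$ and $g\mapsto g(0)$; the canonical gluing makes $\Gamma_\phi$ the diagonal, the map $q$ of \S\ref{specialsection} becomes $(f,g)\mapsto f(0)-g(0)$, and its kernel is $\{(f,g):f(0)=g(0)\}=\cO_X=L$. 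Hence $F\cong L$, as wanted.

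Finally I would conclude formally. By the previous step, $\iota$ is a section of the morphism $h|_{P^\circ}\colon P^\circ\rar Jac(X)$; this morphism is separated (its source $P^\circ$ is a variety), so the section $\iota$ is a closed immersion, and by the first paragraph it is surjective. A surjective closed immersion onto a reduced scheme is an isomorphism, so $\iota$ is an isomorphism and $h|_{P^\circ}=\iota^{-1}$ is an isomorphism of $P^\circ$ onto $Jac(X)\subset\ov{Jac(X)}$; setting $J(X):=P^\circ=\iota(Jac(X))\subset P$ gives the asserted natural inclusion together with the statement that $h$ restricts to an isomorphism on it. I expect the step needing the most care to be the local identification of the kernel sheaf $F$ with $L$ at the node---equivalently, matching the ``canonical gluing'' of $p^*L$ with the modular description of the $\p^1$-bundle $P$; the remaining steps are elementary descent along the node and bookkeeping with $\pi\colon P\rar Jac(X')$.
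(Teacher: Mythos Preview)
Your proof is correct and follows essentially the same approach as the paper: the paper defines the inclusion $i\colon Jac(X)\hookrightarrow P$ by sending $L_0$ to $(p^*L_0,F^1)$ where $F^1=\ker\{(p^*L_0)_y\oplus(p^*L_0)_z\to Q\}$ arising from $0\to L_0\to p_*p^*L_0\to Q\to 0$, which is exactly your graph $\Gamma_\phi$ of the canonical gluing, and then asserts that $h$ is an isomorphism over $Jac(X)$ ``by construction'' (with a citation to \cite{BhosleParam}). Your version supplies the details the paper omits---the descent description of line bundles on $X$, the local computation at the node identifying $F$ with $L$, and the formal argument that a bijective section of a separated morphism is an isomorphism---but the underlying construction is identical.
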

\begin{proof} See \cite[889]{BhosleParam}.
The inclusion $Jac(X)\sta{i}{\hookrightarrow} P$ is given as: given $L_0\in Jac(X)$,
consider the exact sequence,
$$
0\rar L_0\rar p_*p^*L_0\rar Q \rar 0.
$$
Here $Q$ is a rank one skyscraper sheaf supported at the node $x$.
Consider the kernel $F^1:=\,ker \{(p_*p^*L_0)_{x}=(p^*L_0)_y\oplus (p^*L_0)_z\rar Q\}$. Then $i(L_0):=(p^*L_0,F^1)$. 
By construction $h$ is an isomorphism over $Jac(X)$.

\end{proof}

Let $S_y\subset P$ (resp. $S_z\subset P$) denote the section which corresponds to the points parametrizing $(p^*L,p^*(L)_y)\in P$ (resp. $(p^*L,p^*(L)_z$)).

We now look for subvarieties of $P$, which are an analogue of the subvarieties $W_i$ of the Jacobian of a smooth projective curve. These are defined in \cite{BhosleParam} as follows.
 
Denote $X_0$ the smooth locus of the nodal curve $X$, with one node at $x\in X$.
Fix a basepoint $p\in X_0$.

Define the morphism $f_d$, for $1\leq d\leq g$:
$$
\begin{array}{ccc}
\mbox{Sym}^d(X_0) & \rar & P \\
(x_1,x_2,...,x_d)& \mapsto & \cO_{X}(x_1+x_2+...+x_d - d.p). \\
\end{array}
$$

Denote $\widetilde{W_d}$ the closure of the image of the morphism $f_d$, with the reduced scheme structure.

We will use the following decomposition from \cite{BhosleParam}, for the projection $\pi:P\rar Jac(X')$. Recall that $X'\rar X$ is the normalization of $X$ such that $y,z\in X'$ lie over $x\in X$.

\begin{lemma}\label{genceresa}
There is a decomposition of cycles in the Chow group $CH_{g-d}(P)$:
$$
\widetilde{W}_{g-d}\,=\, \pi^{-1}(W_{g-d}).S_{y} + \pi^{-1}W_{g-d-1}.
$$
\end{lemma}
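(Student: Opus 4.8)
The plan is to compute the class of $\widetilde{W}_{g-d}$ directly from the defining morphism $f_d$ by comparing it with the corresponding construction on the normalized Jacobian $Jac(X')$, exploiting the fact that $\pi:P\rar Jac(X')$ is a $\p^1$-bundle and that $P$ contains the distinguished section $S_y$ (as well as $S_z$). First I would recall that over the open locus $Jac(X)\subset P$ the morphism $h$ is an isomorphism, and that a point of $P$ lying over $L\in Jac(X')$ records a line $F^1\subset L_y\oplus L_z$; the two loci where $F^1$ degenerates to $L_y$ or to $L_z$ are precisely the sections $S_z$ and $S_y$ respectively. The composite $\pi\circ i:Jac(X)\rar Jac(X')$ is the projection $p^*$ with one-dimensional fibres $\comx^*$, and the point $\cO_X(x_1+\dots+x_d-dp)$ of $P$ maps under $\pi$ to $\cO_{X'}(x_1+\dots+x_d-dp)\in Jac(X')$, i.e. to a point of $W_{g-d}$ (using the Poincaré identification $W_i\leftrightarrow W^{g-i}$). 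Hence set-theoretically $\widetilde{W}_{g-d}$ lies over $W_{g-d}$, and I would first argue that it has two irreducible components (for generic $d$): one of dimension $g-d$ lying in the section-like locus and one that dominates $W_{g-d}$ with $\p^1$-fibres but is actually cut down by one dimension because the parabolic structure at $x$ is constrained.

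The key geometric input is the following dichotomy for a point $\cO_X(x_1+\dots+x_d-dp)$ with the $x_i$ in the smooth locus $X_0$: pushing the divisor forward to $X$ and pulling back to $X'$ forces the parabolic line $F^1$ to be the image of the canonical line coming from $p_*p^*$, so in fact the generic point of the image of $f_d$ sits inside $S_y$ (or $S_z$; here one fixes a normalization consistent with Bhosle–Parameswaran's conventions so that it is $S_y$). Thus the $(g-d)$-dimensional piece of $\widetilde{W}_{g-d}$ is $\pi^{-1}(W_{g-d})\cap S_y$, which as a cycle class equals $\pi^{-1}(W_{g-d})\cdot S_y$ since $S_y$ is a section and $\pi^{-1}(W_{g-d})$ is a $\p^1$-bundle over $W_{g-d}$ meeting it transversally. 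The remaining, lower-dimensional contribution comes from the boundary where some $x_i$ specializes to the node $x$: there $\cO_X(x_1+\dots+x_d-dp)$ degenerates to (a twist of) a sheaf supported on fewer points together with torsion at $x$, and the limiting parabolic data is free to move, producing a copy of $\pi^{-1}(W_{g-d-1})$ — the full preimage, dimension $(g-d-1)+1=g-d$, now with the correct dimension because one point has been "absorbed" at the node. I would make this precise by a local analysis of $f_d$ near the boundary divisor $\mathrm{Sym}^{d-1}(X_0)\times\{x\}\subset\overline{\mathrm{Sym}^d(X_0)}$, showing the closure of the image there is exactly $\pi^{-1}(W_{g-d-1})$ and that it appears with multiplicity one.

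Finally I would assemble the two pieces. Since $\widetilde{W}_{g-d}$ is taken with reduced structure and, by the transversality/multiplicity-one statements above, each of the two components appears once, in $CH_{g-d}(P)$ we get
$$
\widetilde{W}_{g-d}=\pi^{-1}(W_{g-d})\cdot S_y+\pi^{-1}(W_{g-d-1}),
$$
which is the claimed decomposition; a clean way to organise the multiplicity bookkeeping is to note $\pi_*\widetilde{W}_{g-d}=W_{g-d}+0$ and $\pi_*(\widetilde{W}_{g-d}\cdot c_1(\cO_P(1)))$ recovers $W_{g-d-1}$ up to the expected factor, pinning down both coefficients. The main obstacle I expect is the boundary analysis: controlling exactly how $f_d$ extends over the locus where points collide at the node, identifying the limit parabolic structure, and verifying that the resulting component is the \emph{entire} preimage $\pi^{-1}(W_{g-d-1})$ with multiplicity one rather than some subvariety or a multiple of it; this is where I would lean most heavily on the explicit sheaf-theoretic description of $h$ and on \cite[Lemma 2.2]{BhosleParam}.
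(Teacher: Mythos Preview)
The paper does not prove this lemma at all: its entire proof is the citation ``See \cite[Lemma 3.5, p.892]{BhosleParam}.'' So any correct direct argument would already go beyond what the paper offers.

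Your argument, however, rests on a geometric misidentification. You assert that the generic point of the image of $f_{g-d}$ lies in the section $S_y$. It does not. By definition $f_{g-d}$ sends $(x_1,\dots,x_{g-d})$, with all $x_i$ in the smooth locus $X_0$, to the \emph{line bundle} $\cO_X(\sum x_i - (g-d)p)$ on the nodal curve $X$; via the inclusion $Jac(X)\hookrightarrow P$ of Proposition~3.2 this is a point of the open locus $Jac(X)\subset P$, where the parabolic line $F^1$ is neither $E_y$ nor $E_z$. The sections $S_y,S_z$ are precisely the complement of $Jac(X)$ in $P$ (they map under $h$ to the non--locally--free sheaves), so the image of $f_{g-d}$ is disjoint from $S_y$. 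Moreover $\widetilde{W}_{g-d}$, being the closure of the image of an irreducible variety, is itself irreducible; it therefore cannot literally decompose as a sum of two distinct effective cycles. The equality in the lemma is an identity of classes in $CH_{g-d}(P)$, not a decomposition into irreducible components, so the whole ``two-component'' picture that organises your proof collapses.

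What does survive from your proposal is the closing remark: determining the class of $\widetilde{W}_{g-d}$ by computing $\pi_*\widetilde{W}_{g-d}$ and $\pi_*(\widetilde{W}_{g-d}\cdot c_1(\cO_P(1)))$ and then invoking the projective bundle formula \eqref{PBformula}. That is a sound strategy and is essentially how such identities are established; but carrying it out requires an honest computation of the second pushforward (in particular, understanding how $\widetilde{W}_{g-d}$ meets a general fibre of $\pi$ and how it meets $S_y$ in the closure), not the component analysis you outlined.
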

\begin{proof}
See \cite[Lemma 3.5,p.892]{BhosleParam}.
\end{proof}

We now have good analogues of the naturally defined subvarieties $\{W_i\}$. Our next aim is to extend the basic operations $\textbf{n}_*,\textbf{n}^*$, the Pontryagin product and Fourier transform on the group of cycles on $P$. This is done in the next few subsections.


\subsection{Extension of the natural operators $\textbf{n}^*,\,\textbf{n}_*$ on $A^*(P)$}

Given an integer $n\in \Z$, consider the multiplication map
$$
\textbf{n}:Jac(X)\rar Jac(X),\, x\mapsto n.x.
$$
This map induces a group homomorphism $\textbf{n}^*:A^k(Jac(X))_\Q\rar A^k(Jac(X))_\Q$. Since the variety $Jac(X)$ is not a compact variety, the pushforward map $\textbf{n}_*$ is not defined.
In this subsection, we would like to define natural extensions of the maps $\textbf{n}^*$ and the pushforward map $\textbf{n}_*$ on the group $A^*(P)_\Q$.

\begin{proposition}\label{extendedmult}
Given an integer $n\in \Z$, the multiplication map $\textbf{n}$ extends on $P$ and defines the pushforward $\textbf{n}_*$ and pullback map $\textbf{n}^*$ on $A^k(P)_\Q$.
\end{proposition}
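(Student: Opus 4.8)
The plan is to exploit the $\p^1$-bundle structure $\pi:P\to Jac(X')$ together with the fact that $P$ carries two disjoint sections, namely $S_y$ and $S_z$, to build the extended multiplication map. First I would observe that, since $\textbf{n}$ is already defined on $Jac(X')$ (this is the Jacobian of the \emph{smooth} curve $X'$), it suffices to lift $\textbf{n}$ to the total space $P$. The key point is that $P$ is not canonically a product, but the sections $S_y$ and $S_z$ trivialize the bundle off their union: more precisely, $P\setminus(S_y\cup S_z)$ is a $\G_m$-bundle over $Jac(X')$, and in fact $P$ is the projective completion $\p(\cO\oplus \cL)$ of a line bundle $\cL$ on $Jac(X')$ (this is implicit in \cite[Proposition 2.2]{Bhosle}). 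Writing $P=\p(\cO\oplus\cL)$ with $S_y$ the zero section and $S_z$ the infinity section, I would define $\widetilde{\textbf{n}}:P\to P$ as the map covering $\textbf{n}:Jac(X')\to Jac(X')$ which on fibres is induced by the identification $\textbf{n}^*\cL\cong \cL^{\otimes n^2}$ (or the appropriate power dictated by the weight of $\cL$ under the second grading), followed by the $n$-th power map $\G_m\to\G_m$ on the fibre coordinate, extended to $\p^1$ by sending $0\mapsto 0$ and $\infty\mapsto\infty$. Concretely, in the fibre coordinate $t$ this is $t\mapsto t^{n}$ (or a twist thereof); the map is a finite morphism, and in particular proper, so both $\widetilde{\textbf{n}}_*$ and $\widetilde{\textbf{n}}^{\,*}$ are defined on $A^*(P)_\Q$.

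Next I would check compatibility: $\pi\circ\widetilde{\textbf{n}}=\textbf{n}\circ\pi$ by construction, so $\widetilde{\textbf{n}}^{\,*}$ and $\widetilde{\textbf{n}}_*$ restrict to the usual $\textbf{n}^*$, $\textbf{n}_*$ on classes pulled back from $Jac(X')$, and $\widetilde{\textbf{n}}$ preserves each section $S_y$, $S_z$, acting on $S_y\cong Jac(X')$ as $\textbf{n}$ itself. One then records the action on the generators: $\widetilde{\textbf{n}}^{\,*}\pi^{-1}W_i=\pi^{-1}\textbf{n}^*W_i$, which is controlled by Lemma 2.2 via the second grading, and $\widetilde{\textbf{n}}^{\,*}c_1(\cO_P(1))$, which is $c_1$ of the pullback of the tautological bundle; a short computation with $\p(\cO\oplus\cL)$ shows this stays in the span of $c_1(\cO_P(1))$ and $\pi^{-1}A^1(Jac(X'))$. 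The same bookkeeping for $\widetilde{\textbf{n}}_*$ follows from the projection formula together with $\widetilde{\textbf{n}}_*1=\deg(\widetilde{\textbf{n}})\cdot 1$ on fibres.

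The main obstacle is making the fibrewise $n$-th power map globally well-defined and checking it is a morphism of varieties rather than merely a rational map: the naive fibre map $t\mapsto t^n$ depends on the choice of trivialization, and patching requires that the transition functions of $\cL$ are compatible with raising to the $n$-th power, i.e. one genuinely needs the identification $\textbf{n}^*\cL\cong\cL^{\otimes n}$ (up to torsion/torsors on $Jac(X')$) to glue the local formulas. I expect to need the description of $\cL$ as a natural line bundle on $Jac(X')$ — essentially the one whose fibre records the difference of the two points $y,z$, so that $\cL$ has a canonical cube structure and $\textbf{n}^*\cL$ is a known power of $\cL$ — and then the gluing is automatic and the resulting map is finite of degree $n$ on each fibre. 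Once that is in place, properness is immediate and the induced operators on $A^*(P)_\Q$ are defined with no further difficulty; the compatibility statements above are then routine applications of flat pullback, proper pushforward, and the projection formula.
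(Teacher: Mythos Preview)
Your proposal reaches the right conclusion by essentially the same fibrewise mechanism---extending $t\mapsto t^n$ from $\comx^*$ to $\p^1$---but you take a detour that the paper avoids. You start from $\textbf{n}$ on $Jac(X')$ and try to \emph{lift} it to the $\p^1$-bundle $P=\p(\cO\oplus\cL)$, which forces you to worry about gluing and about the identification $\textbf{n}^*\cL\cong\cL^{\otimes n}$. The paper instead starts from $\textbf{n}$ on $Jac(X)$, the Jacobian of the \emph{nodal} curve: this is already a commutative group variety, so multiplication by $n$ is a globally defined morphism on it with no gluing needed. Since $Jac(X)\subset P$ is open and dense (its complement is $S_y\cup S_z$), the only thing left is to extend across those two sections; on each $\p^1$-fibre the map is $a\mapsto a^n$ on $\comx^*$, which obviously extends to $\p^1$ fixing $0$ and $\infty$.

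Your route is not wrong---the gluing does go through because $\cL\in\mbox{Pic}^0(Jac(X'))$ and hence $\textbf{n}^*\cL\cong\cL^{\otimes n}$---but it manufactures an obstacle that disappears once you remember that the group law is already present on the open part $Jac(X)$. The payoff of the paper's approach is brevity; the payoff of yours is that it makes the compatibility with the projective-bundle decomposition (your second paragraph) more explicit, which is in fact used immediately afterwards in Lemma~\ref{eigendecomp}.
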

\begin{proof}
We need to check that the map $\textbf{n}:Jac(X)\to Jac(X)$  extends
to $\textbf{n}:P\to P$. This can be checked fibre wise.
The natural multiplication map on the fibres $\comx^*$ of
$Jac(X)\to Jac(X')$ is the usual map $a\mapsto a^n$ on
$\comx^*$. This map obviously extends to $\p^1$
fixing the complementary points $0$ and $\infty$.
\end{proof}

Consider the eigenspace decomposition of the groups of algebraic cycles:
$$
A^k(P)_\Q=\bigoplus_{s}A^k(P)_{(s)},
$$
where
$$
A^k(P)_{(s)}:= \{\alpha\in A^k(P)_\Q: \textbf{n}_*\alpha = n^{2g-2k+s}.\alpha, \,\textbf{n}^* \alpha = n^{2k-s}.\alpha, \m{ for all } n\in \Z \}.
$$

Consider the projection $\pi:P\rar Jac(X')$.

\begin{lemma}
The group of algebraic cycles $A^k(P)_\Q$ can be expressed as:
\begin{equation}\label{PBformula}
A^k(P)_\Q\,=\, A^{k}(Jac(X'))_\Q\oplus H.A^{k-1}(Jac(X'))_\Q.
\end{equation}
Here $H:=c_1(\cO_P(1))$.
\end{lemma}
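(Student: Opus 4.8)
The plan is to deduce \eqref{PBformula} from the classical projective bundle theorem, taking care that we are working modulo algebraic equivalence rather than rational equivalence. By the Proposition above, $P$ is a $\p^1$-bundle over $Jac(X')$, and in fact $P=\p(\cV)$ is the projectivization of a rank two vector bundle $\cV$ on $Jac(X')$ (concretely $\cV=\cP_y\oplus\cP_z$, the restrictions of a Poincar\'e line bundle to $\{y\}\times Jac(X')$ and $\{z\}\times Jac(X')$), with $\cO_P(1)$ the relative hyperplane bundle and $H=c_1(\cO_P(1))$. The projective bundle theorem for Chow groups then yields
$$
CH^k(P)_\Q \;=\; \pi^*CH^k(Jac(X'))_\Q\;\oplus\; H\cdot\pi^*CH^{k-1}(Jac(X'))_\Q,
$$
the isomorphism being $\Phi(\alpha,\beta)=\pi^*\alpha+H\cdot\pi^*\beta$.

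Next I would check that this decomposition descends to algebraic equivalence. Surjectivity of the induced map $\bar\Phi\colon A^k(Jac(X'))_\Q\oplus A^{k-1}(Jac(X'))_\Q\to A^k(P)_\Q$ is clear: lift a class on $P$ to $CH^k(P)_\Q$, write it in the above form, and reduce modulo algebraic equivalence. For injectivity, recall that flat pullback $\pi^*$, proper pushforward $\pi_*$, intersection with the divisor class $H$, and pullback $s_y^*$ along the section $s_y\colon Jac(X')\to P$ cutting out $S_y$ all send algebraically trivial cycles to algebraically trivial cycles. So assume $\pi^*\alpha+H\cdot\pi^*\beta\equiv 0$ on $P$. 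Applying $\pi_*$ and the projection formula, and using $\pi_*1=0$ together with $\pi_*H=1$ (since $\cO_P(1)$ restricts to $\cO_{\p^1}(1)$ on the fibres), gives $\beta\equiv 0$; hence $H\cdot\pi^*\beta\equiv 0$ and therefore $\pi^*\alpha\equiv 0$; applying $s_y^*$ and using $s_y^*\pi^*=\mathrm{id}$ gives $\alpha\equiv 0$. This establishes \eqref{PBformula}.

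The argument is essentially a bookkeeping exercise; the one genuine point is precisely that the identity is asserted modulo algebraic equivalence, so the projective bundle theorem cannot be invoked verbatim and one must verify that the splitting is compatible with it — for this the existence of the sections $S_y$ (equivalently $S_z$) of $\pi$ is what is used. I would also add the standard caveat that $\cO_P(1)$, and hence $H$, is only determined up to twist by $\pi^*$ of a line bundle on $Jac(X')$, but both sides of \eqref{PBformula} are manifestly insensitive to this choice.
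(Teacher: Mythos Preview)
Your argument is correct and follows the same approach as the paper, which simply invokes the projective bundle formula from Fulton for the $\p^1$-bundle $\pi\colon P\to Jac(X')$. Your write-up is in fact more careful than the paper's one-line citation: you explicitly verify that the Chow-group decomposition descends to algebraic equivalence by checking that $\pi_*$, intersection with $H$, and $s_y^*$ all respect algebraic equivalence, a point the paper leaves implicit.
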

\begin{proof}
This is a consequence of the projective bundle formula \cite{Fulton} applied to the $\p^1$-bundle $\pi: P\rar Jac(X')$.
\end{proof}
  
\begin{lemma}\label{eigendecomp}
The pushforward map $\textbf{n}_*$ and the pullback map $\textbf{n}^*$ are compatible with the decomposition in \eqref{PBformula}.
In particular, the eigenspace $A^k(P)_{(s)}$ can be written as:
$$
A^k(P)_{(s)}=A^{k}(Jac(X'))_{(s)}\oplus H.A^{k-1}(Jac(X'))_{(s)}.
$$
\end{lemma}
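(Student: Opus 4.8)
\emph{Plan.} The idea is to deduce everything from the commuting square attached to Proposition~\ref{extendedmult} together with one genuinely geometric computation, namely the value of $\textbf{n}^{*}H$. First I would record that, by the construction in the proof of Proposition~\ref{extendedmult}, the extended map $\textbf{n}:P\to P$ lies over multiplication by $n$ on $Jac(X')$, i.e. $\pi\circ\textbf{n}=\textbf{n}\circ\pi$, where on the right $\textbf{n}$ is the usual multiplication-by-$n$ endomorphism of the abelian variety $Jac(X')$. Since $\pi$ is a $\p^{1}$-bundle, $\pi^{*}$ is injective (the restriction to any section is a left inverse), and the square gives $\textbf{n}^{*}\pi^{*}=\pi^{*}\textbf{n}^{*}$; so $\textbf{n}^{*}$ maps $\pi^{*}A^{*}(Jac(X'))_{\Q}$ into itself and acts there as the pullback $\textbf{n}^{*}$ of $Jac(X')$.

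\emph{The class $\textbf{n}^{*}H$.} The crux is to show $\textbf{n}^{*}H=nH$. Because $\textbf{n}$ fixes the two points $0,\infty\in\p^{1}$, it preserves the sections $S_{y}$ and $S_{z}$, and in a fibre coordinate $t$ cutting out $S_{y}$ it has the form $t\mapsto t^{n}$; hence $\textbf{n}^{-1}(S_{y})=n\,S_{y}$ as Cartier divisors, so $\textbf{n}^{*}[S_{y}]=n[S_{y}]$ in $A^{1}(P)$, and likewise $\textbf{n}^{*}[S_{z}]=n[S_{z}]$. Now $P=\p(\cO_{Jac(X')}\oplus\cN)$ where $\cN$ is the line bundle on $Jac(X')$ classifying the extension $1\to\comx^{*}\to Jac(X)\to Jac(X')\to0$; this $\cN$ lies in $\mathrm{Pic}^{0}(Jac(X'))$, hence $c_{1}(\cN)=0$ in $A^{1}(Jac(X'))_{\Q}$, so the normal bundle $N_{S_{y}/P}$ and the restriction $\cO_{P}(1)|_{S_{y}}$ are both algebraically trivial. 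As $[S_{y}]$ and $H=c_{1}(\cO_{P}(1))$ have the same fibre degree, $[S_{y}]-H$ lies in $\pi^{*}A^{1}(Jac(X'))_{\Q}$ and restricts to $c_{1}(N_{S_{y}/P})-c_{1}(\cO_{P}(1)|_{S_{y}})=0$ on $S_{y}$; since the restriction to $S_{y}$ is injective on $\pi^{*}A^{1}(Jac(X'))_{\Q}$, this forces $[S_{y}]=H$ (and similarly $[S_{z}]=H$). Therefore $\textbf{n}^{*}H=nH$, i.e. $H\in A^{1}(P)_{(1)}$.

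\emph{Conclusion.} Granting this, multiplicativity of $\textbf{n}^{*}$ gives $\textbf{n}^{*}(\pi^{*}\alpha+H\cdot\pi^{*}\beta)=\pi^{*}(\textbf{n}^{*}\alpha)+nH\cdot\pi^{*}(\textbf{n}^{*}\beta)$, so $\textbf{n}^{*}$ is diagonal with respect to \eqref{PBformula}. By the projection formula $\textbf{n}_{*}\textbf{n}^{*}=\deg(\textbf{n})\cdot\mathrm{id}$ on $A^{*}(P)_{\Q}$, with $\deg(\textbf{n})\neq0$ for $n\neq0$; hence $\textbf{n}^{*}$ is invertible over $\Q$ and $\textbf{n}_{*}=\deg(\textbf{n})\,(\textbf{n}^{*})^{-1}$ is diagonal for \eqref{PBformula} too, which proves the first assertion. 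For the eigenspaces, I would use that the intersection product on $A^{*}(P)_{\Q}$ is homogeneous for the second grading (immediate from $\textbf{n}^{*}$ being a ring homomorphism) and that $\pi^{*}$ preserves the grading index; then, writing $\alpha=\pi^{*}\alpha_{0}+H\cdot\pi^{*}\alpha_{1}\in A^{k}(P)_{\Q}$ and comparing the $\textbf{n}^{*}$-eigenvalue $n^{2k-s}$ of $\alpha$ with those of its two summands shows $\alpha\in A^{k}(P)_{(s)}$ precisely when $\alpha_{0}\in A^{k}(Jac(X'))_{(s)}$ and $\alpha_{1}\in A^{k-1}(Jac(X'))_{(s-1)}$, giving the required description of $A^{k}(P)_{(s)}$ in terms of Beauville eigenspaces of $Jac(X')$.

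\emph{The hard part.} The only step that is not purely formal is the computation $\textbf{n}^{*}H=nH$ in the second paragraph: it depends on knowing the $\p^{1}$-bundle $P$---its classifying line bundle $\cN$ and its two distinguished sections $S_{y},S_{z}$---precisely enough to see that the relevant classes are algebraically trivial over $Jac(X')$, i.e. that $H$ occupies a single $\textbf{n}^{*}$-eigenspace. Everything else is bookkeeping with $\pi\circ\textbf{n}=\textbf{n}\circ\pi$ and the projective bundle formula.
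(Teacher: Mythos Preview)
The paper's own proof consists of the single word ``Clear,'' so your argument supplies everything the paper omits. Your reasoning is sound: the commuting square $\pi\circ\textbf{n}=\textbf{n}\circ\pi$ handles the first summand, and your identification $[S_{y}]=H$ in $A^{1}(P)_{\Q}$ (via the classifying bundle $\cN\in\mathrm{Pic}^{0}(Jac(X'))$, so that $c_{1}(\cN)=0$ modulo algebraic equivalence) yields $\textbf{n}^{*}H=nH$; compatibility of $\textbf{n}^{*}$ and $\textbf{n}_{*}$ with the projective-bundle decomposition then follows formally.

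Note, however, that your careful computation actually \emph{disagrees} with the lemma as stated in the paper. You find the second summand to be $H\cdot A^{k-1}(Jac(X'))_{(s-1)}$, not $H\cdot A^{k-1}(Jac(X'))_{(s)}$: if $\alpha_{1}\in A^{k-1}(Jac(X'))_{(s')}$ then $\textbf{n}^{*}(H\cdot\pi^{*}\alpha_{1})=n\cdot n^{2(k-1)-s'}\,H\cdot\pi^{*}\alpha_{1}=n^{2k-1-s'}H\cdot\pi^{*}\alpha_{1}$, and matching the eigenvalue $n^{2k-s}$ forces $s'=s-1$. Your version is the correct one. In fact the paper's definition of $A^{k}(P)_{(s)}$ is already internally inconsistent: imposing simultaneously $\textbf{n}^{*}\alpha=n^{2k-s}\alpha$ and $\textbf{n}_{*}\alpha=n^{2g-2k+s}\alpha$ would force $\textbf{n}_{*}\textbf{n}^{*}=n^{2g}\,\mathrm{id}$, whereas $\deg(\textbf{n}\colon P\to P)=n^{2g}\cdot|n|$ since the fibrewise map $t\mapsto t^{n}$ on $\p^{1}$ has degree $|n|$. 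So either the $\textbf{n}_{*}$-exponent in the definition or the grading index on the second summand (or both) needs adjustment; your analysis, based on the $\textbf{n}^{*}$-condition, pinpoints this and gives the right correction.
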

\begin{proof}
Clear.
\end{proof}


\subsection{Pontryagin product on the ring $A^*(P)_\Q$}\label{extPP}

We first consider the multiplication (or the addition) map
$$
m:Jac(X)\times Jac(X) \rar Jac(X),\, (a,b)\mapsto a+b.
$$
We first note that the map $m$ does not extend on the compactification $P$ (otherwise $P$ will be a group variety, which is not the case).
Hence, we consider the rational map:
$$
m:P\times P\rar P.
$$

After suitable blow-ups, we can resolve the rational map $m$ to get a commutative diagram:
\begin{eqnarray}\label{blowmultiplication}
\tilde{P}  &  & \\
\downarrow \!f & \sta{\tilde{m}}{\searrow} & \\
P \times P & \sta{m}{\rar} & P
\end{eqnarray}

\begin{lemma}
There is a Pontryagin product $*$ on the ring of algebraic cycles $A^*(P)_\Q$:
$$
*:A^k(P)_\Q\times A^l(P)_\Q\rar A^{k+l-g}(P)_\Q.
$$
In particular the ring $A^*(P)_\Q$ has two products, the intersection product and the Pontryagin product.
\end{lemma}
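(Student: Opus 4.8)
The plan is to imitate the abelian-variety construction of the Pontryagin product, using the resolution $\tilde m\colon\tilde P\rar P$ of diagram \eqref{blowmultiplication} in place of the addition morphism that $P$ does not carry. For $x\in A^k(P)_\Q$ and $y\in A^l(P)_\Q$ one sets
$$
x*y\ :=\ \tilde m_*\!\left(f^*(p^*x\cdot q^*y)\right),
$$
where $p,q\colon P\times P\rar P$ are the two projections and $p^*x\cdot q^*y$ is the intersection product on the smooth projective variety $P\times P$. Since $f$ is a morphism of smooth projective varieties and $\tilde m$ is proper, each of $f^*$, the intersection product, and $\tilde m_*$ preserves algebraic equivalence, so $*$ descends to a well-defined operation on $A^*(P)_\Q$; together with the intersection product already present this yields the two products on $A^*(P)_\Q$. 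The bidegree is a dimension count: $f^*$ preserves codimension and $\tilde m_*$ lowers dimension by $\dim\tilde P-\dim P$, so the class $p^*x\cdot q^*y\in A^{k+l}(P\times P)_\Q$ lands in the codimension asserted in the statement.

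For this to deserve the name of a product one must check independence of the chosen resolution. Any two resolutions of the rational map $m$ are dominated by a third (resolve the graph of the birational correspondence between them), and for a proper birational $g\colon\tilde P'\rar\tilde P$ compatible with both sets of resolution data the projection formula gives $g_*g^*=\mathrm{id}$, whence $\tilde m'_*f'^{\,*}=\tilde m_*g_*g^*f^*=\tilde m_*f^*$; thus $*$ is intrinsic. Commutativity then follows by computing $*$ with a symmetric resolution, e.g. one obtained by blowing up the $\sigma$-invariant ideal of the indeterminacy locus of $m$, where $\sigma$ exchanges the two factors of $P\times P$: such a resolution carries an involution $\tilde\sigma$ with $f\tilde\sigma=\sigma f$ and $\tilde m\tilde\sigma=\tilde m$ (since $m\sigma=m$ as rational maps), and then $\sigma^*(p^*x\cdot q^*y)=p^*y\cdot q^*x$ together with $\tilde m_*\tilde\sigma^*=\tilde m_*\tilde\sigma_*=(\tilde m\tilde\sigma)_*=\tilde m_*$ gives $y*x=x*y$. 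The unit is the class of the origin $0\in Jac(X)\subset P$: since $0$ lies off the sections $S_y$ and $S_z$, the slice $\{0\}\times P$ is disjoint from the indeterminacy locus of $m$, so there $f$ is an isomorphism and $\tilde m$ restricts to the translation-by-$0$ identity of $P$, yielding $[0]*y=y$, exactly as in the smooth case.

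Associativity is the substantive point, and I expect it to be the main obstacle: on an abelian variety it is automatic because addition is a genuine morphism, whereas here one must keep track of the blow-up centres. The natural route is to resolve also the triple-addition rational map $P\times P\times P\rar P$, $(a,b,c)\mapsto a+b+c$, by a proper birational $F\colon Q\rar P\times P\times P$ with $\mu\colon Q\rar P$, chosen to dominate simultaneously the two iterated resolutions coming from $m\circ(m\times\mathrm{id})$ and $m\circ(\mathrm{id}\times m)$; a projection-formula computation, collapsing the intermediate blow-ups via $g_*g^*=\mathrm{id}$, then identifies both $(x*y)*z$ and $x*(y*z)$ with $\mu_*\!\left(F^*(p_1^*x\cdot p_2^*y\cdot p_3^*z)\right)$. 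The delicate step is verifying that pushing forward along $\tilde m$ twice really agrees with pushing forward along $\mu$ once, i.e. that the relevant square of blow-ups commutes up to birational identifications that wash out under $g_*g^*=\mathrm{id}$. Once associativity is in hand, compatibility of $*$ with the eigenspace decomposition $A^k(P)_\Q=\bigoplus_s A^k(P)_{(s)}$ of Lemma \ref{eigendecomp} follows as in the smooth case, because $\textbf{n}$ extends to $P$ by Proposition \ref{extendedmult} and the resolutions may be taken $\textbf{n}$-equivariantly.
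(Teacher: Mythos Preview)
Your definition $x*y=\tilde m_*\bigl(f^*(p^*x\cdot q^*y)\bigr)$ is exactly the paper's, so the approach is the same. The paper's proof is much terser: it does not verify independence of the resolution, commutativity, the unit, or associativity at all; instead its sole additional content is the concrete identification of the indeterminacy locus, namely that a single blow-up of $S_y\times S_z\cup S_z\times S_y$ in $P\times P$ suffices to resolve $m$. Your extra work on the product axioms is sound (and more than the paper supplies), while the paper's explicit blow-up centre is the one piece of information you use implicitly (in your unit argument) but do not state.
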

\begin{proof}
Consider the above resolution of the rational map $m$.
Note that the map $f$ is a sequence of blow-ups.
In fact it needs only one blow up: one
needs to blow up only $S_y\times S_z$ and $S_z\times
S_y$ in $P\times P$. Here $S_y\subset P$ (resp. $S_z\subset P$) denotes the section which corresponds to the points parametrizing $(p^*L,p^*(L)_y)\in P$ (resp. $(p^*L,p^*(L)_z$)).

 Hence, by the blow-up formula \cite{Fulton}, we have
$$
A^k(\tilde{P})_\Q\,=\,A^k(P\times P)_\Q \oplus A^k(S).
$$
Here $S\subset \tilde{P}$ is a proper closed subvariety, and determined by the centre of blow-ups. 
Denote the two projections on $P\times P$ by $p_1$ and $p_2$.
We now define the Pontryagin product $*$ as follows:
$$
*\,:\, A^k(P)_\Q\times A^l(P)_\Q\rar A^{k+l-g}(P)_\Q
$$
$$
*(\alpha,\beta)= \tilde{m}_*(f^*(p_1^*\alpha.p_2^*\beta))\,\in A^{k+l-g}(P)_\Q
$$
This gives the Pontryagin product on the ring $A^*(P)_\Q$.
\end{proof}

\begin{lemma}\label{compatiblePP}
The Pontryagin product $*$ on $A^*(P)_\Q$ is compatible with the decomposition   \eqref{PBformula}.
\end{lemma}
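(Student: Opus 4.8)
The plan is to trace the decomposition $A^*(P)_\Q = A^*(Jac(X'))_\Q \oplus H\cdot A^*(Jac(X'))_\Q$ of \eqref{PBformula} through the definition of the Pontryagin product and show that the blow-up correction term does not contribute to the image. First I would record how the projections $p_1,p_2: P\times P \to P$ interact with the two pieces: the section $S_y$ plays a distinguished role since, on the locus where $m$ is resolved, the addition map on the $\comx^*$-fibres extends to $\p^1$ fixing the points $0,\infty$ corresponding to $S_z,S_y$. Concretely I would compute $\tilde m_*(f^*(p_1^*\alpha\cdot p_2^*\beta))$ for $\alpha,\beta$ ranging over the four types of products: (i) both pulled back from $Jac(X')$, (ii) one of the form $\pi^*a$ and the other $H\cdot \pi^*b$, (iii) both of the form $H\cdot\pi^*a$. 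The claim to establish is that in cases (i) and (ii) the result lies in the respective summand dictated by the count of $H$-factors, and in case (iii), where the naive degree count would push us out of the allowed range, the product actually lands back inside $A^*(Jac(X'))_\Q \oplus H\cdot A^*(Jac(X'))_\Q$ because $H^2$ gets expressed via the relation coming from the $\p^1$-bundle structure (equivalently, $H$ restricted to each fibre squares to zero up to classes pulled back from the base).

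The key computational step is that $\tilde m$ factors compatibly with $\pi$: there is a map $\tilde\pi: \tilde P \to Jac(X')\times Jac(X')$ and an addition map $m': Jac(X')\times Jac(X') \to Jac(X')$ on the base such that $\pi\circ\tilde m = m'\circ\tilde\pi\circ(\text{something})$, modulo the blow-up centres. So on the "base part'' of any class the Pontryagin product on $P$ reduces to the honest Pontryagin product on $Jac(X')$ (which preserves its own grading by Beauville), and the only new feature is the behaviour of the $H$-factors and of the fibre classes $S_y, S_z$. I would use Lemma~\ref{genceresa} and the description of $S_y$ as the zero/infinity section to pin down $H\cdot S_y$ and products like $S_y * S_y$, $S_y * S_z$ in terms of classes of the form $\pi^{-1}(\cdot)$ and $H\cdot\pi^{-1}(\cdot)$.

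The main obstacle I expect is controlling the excess-intersection contribution from the blow-up: after pulling back $p_1^*\alpha\cdot p_2^*\beta$ by $f$ and pushing forward by $\tilde m$, one gets a main term plus a term supported on the exceptional divisor $S\subset\tilde P$ over $S_y\times S_z$ and $S_z\times S_y$. I would argue this correction term, once pushed to $P$ by $\tilde m$, again decomposes into a base class plus $H$ times a base class — either by direct computation on the exceptional $\p^1\times\p^1$ (or similar) geometry, or more cleanly by observing that $\tilde m$ maps the exceptional locus into a fibre of $\pi$ (since $S_y, S_z$ are the two special sections and their "sum'' is concentrated over the node), so its image contributes only classes of the form $(\text{point on base})\times\p^1$, i.e. multiples of a fibre class $F$, and $F$ together with $H$ generates the fibre part of \eqref{PBformula}. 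Once these two observations are in place the compatibility with \eqref{PBformula} is immediate by bilinearity, so I would spend most of the write-up making the blow-up bookkeeping precise rather than on the (routine) verification for the four types of product classes.
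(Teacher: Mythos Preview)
Your proposal is correct in spirit and would ultimately succeed, but it is considerably more elaborate than what the paper does. The paper's entire argument is your ``key computational step'': one simply observes that $\tilde m$ is a map of $\p^1$-fibrations over $m':Jac(X')\times Jac(X')\to Jac(X')$ (so fibres go to fibres and the tautological class $H=c_1(\cO_P(1))$ is preserved), whence $\tilde m_*$ acting on the projective-bundle decomposition \eqref{PBformula} is governed by $m'_*$ on the base. That single sentence is the proof.

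What you add on top of this --- the four-case analysis and especially the ``excess/blow-up correction'' bookkeeping --- is not needed, and your framing of that correction is slightly off. In the definition $\alpha*\beta=\tilde m_*\,f^*(p_1^*\alpha\cdot p_2^*\beta)$ there is no intersection taking place on $\tilde P$: $f^*$ is just the (injective) pullback along a blow-up, so there is no ``main term plus exceptional term'' splitting of $f^*(p_1^*\alpha\cdot p_2^*\beta)$ to worry about. The only thing to check is that $\tilde m_*$ respects the fibre/$H$ structure, which is exactly the compatibility of $\tilde m$ with the projection $\pi$. Your instinct to verify that the image of the exceptional locus under $\tilde m$ lies in a fibre is fine as a sanity check, but it is subsumed by the observation that $\tilde m$ covers $m'$. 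So: keep the middle paragraph of your plan, drop the blow-up analysis, and you have the paper's proof.
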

\begin{proof}
We just need to note that the section $c_1(\cO_P(1))$ and  any fibre of the $\p^1$-bundle $P$ is preserved under 
multiplication $\tilde{m}$. Hence $\tilde{m}_*$ reduces to $m'_*$ on the decomposition \eqref{PBformula}. Here $m'_*$ is induced by the multiplication $m'$ on $Jac(X')$. 

\end{proof}


\subsection{Fourier transform on the ring $A^*(P)_\Q$}\label{extFT}

We would like to now define Fourier transform on $P$.
Recall that on the usual Jacobian $Jac(C)$ of a smooth projective curve $C$, the Fourier transform is defined via the first Chern class $c_1(\cP)$ of the Poincar\'e line bundle $\cP$, on the product
$Jac(C)\times Jac(C)$. The class $c_1(\cP)$ is $p^*\theta + q^*\theta - m^*\theta$, where $\theta\subset Jac(C)$ is the theta divisor, see \eqref{Poincareclass}.

On the variety $P$ with projection $\pi:P\rar Jac(X')$, we consider the extended theta divisor class (see \cite[Lemma 3.5, p.892]{BhosleParam}):
\begin{equation}\label{extendedthetaclass}
\widetilde{W}_{g}:= S_{y} + \pi^{-1}W_{g-1}.   
\end{equation}
Here $g+1$ is the arithmetic genus of $X$ and $g$ is the genus of the normalization $X'$.

We define the extended Poincar\'e class as follows:
\begin{equation}\label{extendedPoincareclass}
\widetilde{\ell}:= p^*\widetilde{W}_g +q^*\widetilde{W}_g-f_*\widetilde{m}^*\widetilde{W}_g.
\end{equation} 
Here $\widetilde{m}$ and $f$ are defined in \eqref{blowmultiplication}.

\begin{lemma}\label{extendedPoincare}
The extended Poincar\'e class is 
$$
\widetilde{\ell}\,=\,(\pi\times \pi)^*\ell.
$$   
\end{lemma}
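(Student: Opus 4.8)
The plan is to compute the class $\widetilde\ell = p^*\widetilde W_g + q^*\widetilde W_g - f_*\widetilde m^*\widetilde W_g$ in $A^1(P\times P)_\Q$ by expanding $\widetilde W_g = S_y + \pi^{-1}W_{g-1}$ (equation \eqref{extendedthetaclass}) and tracking each summand through the three pullback/pushforward operations, then comparing with $(\pi\times\pi)^*\ell = (\pi\times\pi)^*(p^*\theta + q^*\theta - m'^*\theta)$ on $Jac(X')\times Jac(X')$. The point is that $W_{g-1} = \theta$ is the theta divisor on $Jac(X')$, so the terms $p^*\pi^{-1}W_{g-1}$ and $q^*\pi^{-1}W_{g-1}$ already coincide with $(\pi\times\pi)^*p^*\theta$ and $(\pi\times\pi)^*q^*\theta$ respectively (using the obvious compatibility of the two projections of $P\times P$ with the two projections of $Jac(X')\times Jac(X')$ via $\pi\times\pi$). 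So the content of the lemma is entirely concentrated in the $S_y$ contributions: one must show
$$
p^*S_y + q^*S_y - f_*\widetilde m^*S_y = -\,(\pi\times\pi)^* m'^*\theta + (\pi\times\pi)^*p^*\theta + (\pi\times\pi)^*q^*\theta - p^*\pi^{-1}W_{g-1} - q^*\pi^{-1}W_{g-1},
$$
which after cancelling the $\theta$-terms that already matched amounts to the identity $p^*S_y + q^*S_y - f_*\widetilde m^*S_y = -(\pi\times\pi)^* m'^*\theta + (\text{the }\theta\text{-terms from }f_*\widetilde m^*\pi^{-1}W_{g-1}\text{ that exceed }(\pi\times\pi)^*m'^*\theta)$. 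In practice the cleanest route is: first handle the $\pi^{-1}W_{g-1}$ summand, showing $f_*\widetilde m^*\pi^{-1}W_{g-1} = (\pi\times\pi)^*m'^*\theta$ using Lemma \ref{compatiblePP} (the multiplication $\widetilde m$ covers $m'$ on $Jac(X')$, and $\pi\circ\widetilde m = m'\circ(\pi\times\pi)\circ f$ away from the blown-up locus, hence everywhere); then separately show that the three $S_y$-terms cancel, i.e. $f_*\widetilde m^*S_y = p^*S_y + q^*S_y$.

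The key steps, in order, are: (1) record the commutativity $\pi\circ f = (\pi\times\pi)$ composed with the two projections appropriately, and $\pi\circ\widetilde m = m'\circ(\pi\times\pi)\circ f$, all of which hold generically and extend by the fact that $A^1$ is unaffected by modifications along codimension $\ge 1$ centres once one works modulo algebraic equivalence — more precisely, use the blow-up formula $A^1(\widetilde P)_\Q = A^1(P\times P)_\Q \oplus A^1(S)$ and note $\widetilde m^*$ lands in the part that $f_*$ sends isomorphically, since $f$ is birational; (2) pull back $\widetilde W_g$ along $p$, $q$ and $f\circ$ (resp. $\widetilde m$), expanding via \eqref{extendedthetaclass}; (3) identify $W_{g-1}$ with $\theta$ and use that $S_y$ is a section, so $\pi_* S_y = [Jac(X')]$ and $S_y\cdot(\text{fibre}) = \text{pt}$, which pins down how $S_y$ interacts with pullbacks under the fibrewise-$\P^1$ structure; (4) assemble and check the cancellation, concluding $\widetilde\ell = (\pi\times\pi)^*\ell$.

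The main obstacle I expect is step (1): making rigorous the claim that $f_*\widetilde m^*\widetilde W_g$ is computed "as if" $m$ were an honest morphism $P\times P\to P$ lifting $m'$, i.e. controlling the correction terms supported on the exceptional locus $S\subset\widetilde P$ (the blow-ups of $S_y\times S_z$ and $S_z\times S_y$). One must argue these exceptional contributions either vanish in $A^1(P\times P)_\Q$ after $f_*$ or are absorbed into the $S_y$-cancellation. The cleanest way to see this is probably to observe that $\widetilde W_g$ is the pullback to $P$ (via $h$) of the theta class on $\overline{Jac(X)}$, for which multiplication \emph{is} defined as a morphism on a suitable partial compactification, so $\widetilde m^*\widetilde W_g = f^* m^*(\text{theta})$ up to exceptional classes, and then $f_* f^* = \mathrm{id}$. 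Alternatively, and more in the spirit of this paper, one invokes Lemma \ref{compatiblePP}: since the Pontryagin product (hence $\widetilde m_*$, and dually the relevant pullbacks) is compatible with the decomposition \eqref{PBformula} and reduces to $m'_*$ on the $Jac(X')$-factors, the whole computation descends to $Jac(X')\times Jac(X')$ where it is the classical identity $\ell = p^*\theta + q^*\theta - m'^*\theta$ defining the Poincaré bundle, giving $\widetilde\ell = (\pi\times\pi)^*\ell$ directly. \eop
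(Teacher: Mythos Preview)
Your approach is essentially the paper's: expand $\widetilde W_g = S_y + \pi^{-1}W_{g-1}$, use $f_*\widetilde m^*\pi^{-1}W_{g-1} = (\pi\times\pi)^*m'^{*}\theta$ so that the $W_{g-1}$-terms assemble into $(\pi\times\pi)^*\ell$, and then use $f_*\widetilde m^*S_y = p^*S_y + q^*S_y$ so that the $S_y$-terms cancel. The paper does exactly this in a four-line display (simply asserting the two identities, without your discussion of exceptional contributions), so the ``cleanest route'' you describe midway through is precisely the paper's argument; your first displayed equation is muddled bookkeeping and can be dropped.
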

\begin{proof}
We first note, using Lemma \ref{genceresa},
$$
\begin{array}{ccc}
\widetilde{\ell} & = &  p^*(S_y) +p^*\pi^{-1}W_{g-1} + q^*(S_y) + q^*\pi^{-1}W_{g-1} -            f_*\widetilde{m}^*S_y - f_*\widetilde{m}^*(\pi^{-1}W_{g-1})\\
 &=&  p^*S_y +q^* S_y - f_*\widetilde{m}^*S_y + (\pi\times \pi)^* \ell\\
 &=& p^*S_y  +q^*S_y -p^*S_y -q^*S_y + (\pi\times \pi)^*\ell \\
 &=& (\pi\times \pi)^*\ell.
\end{array}
$$

\end{proof}

\begin{definition}
The Fourier transform $\widetilde{F}$ on $A^*(P)_\Q$ is defined as:
$$
\widetilde{\cF}: A^*(P)_\Q \rightarrow A^*(P)_\Q, 
$$ 
for $x\in A^*(P)_\Q$, let $\widetilde{\cF} x = q_*(p^*x \cdotp e^{\widetilde\ell})$. 
Here $p,q: P\times P\rar P$ are the first and second projections respectively. 
\end{definition}

\begin{lemma}
 The Fourier transform $\widetilde{\cF}$ 
 satisfies following properties:

1) $\widetilde{\cF} \circ \widetilde{\cF} = (-1)^g(-1)^*$

2) $\widetilde{\cF}(x * y) = \widetilde{\cF} x \cdotp \widetilde{\cF} y$ and $\widetilde{\cF}(x \cdotp y) = (-1)^g \widetilde{\cF} x * \widetilde{\cF} y$.
\end{lemma}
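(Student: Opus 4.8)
The plan is to transport the classical Fourier identities of \S\ref{Property2.1}--\S\ref{Property2.2} from $Jac(X')$ to $P$ via the projection $\pi:P\rar Jac(X')$, using Lemma \ref{extendedPoincare} which identifies $\widetilde{\ell}=(\pi\times\pi)^*\ell$. The key point is that although $P$ is not an abelian variety, every operator in sight (the Pontryagin product by Lemma \ref{compatiblePP}, the maps $\textbf{n}^*,\textbf{n}_*$ by Lemma \ref{eigendecomp}, and $\widetilde{\cF}$ by the shape of $\widetilde{\ell}$) is compatible with the projective bundle decomposition \eqref{PBformula}, so the whole computation splits into a ``base'' part on $A^*(Jac(X'))_\Q$ and an ``$H$-part'' $H\cdot A^{*-1}(Jac(X'))_\Q$, and on each part it reduces to the known statement on the abelian variety $Jac(X')$.

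First I would make the compatibility of $\widetilde{\cF}$ with \eqref{PBformula} explicit. Writing $x = \pi^* a + H\cdot\pi^* b$ with $a\in A^*(Jac(X'))_\Q$, $b\in A^{*-1}(Jac(X'))_\Q$, one computes $p^* x\cdot e^{\widetilde\ell}$ on $P\times P$ using $\widetilde\ell=(\pi\times\pi)^*\ell$ and the fact that $p^*H$ restricts to a relative hyperplane on the first factor; pushing forward by $q$ and applying the projection formula together with $q_*\bigl((\pi\times\pi)^*(\,\cdot\,)\bigr)=\pi^* (q'_* (\,\cdot\,))$ (where $q':Jac(X')\times Jac(X')\rar Jac(X')$ is the projection) yields $\widetilde\cF(\pi^* a + H\cdot\pi^* b)=\pi^*(\cF a) + H\cdot \pi^*(\cF b)$. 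In other words $\widetilde\cF = \pi^*\cF\oplus (H\cdot\pi^*\cF)$ under \eqref{PBformula}. Similarly Lemma \ref{compatiblePP} says the Pontryagin product on $P$ is $m'_*$ on each summand, i.e.\ $*$ on $P$ restricts to $*$ on $Jac(X')$ in both the base part and the $H$-part, and the intersection product satisfies $(\pi^* a + H\pi^* b)\cdot(\pi^* c + H\pi^* d) = \pi^*(ac) + H\pi^*(bc+ad) - (H\cdot H)\pi^*(bd)$; the term $H\cdot H$ is handled by the relation $H^2 = \pi^*(e_1) H + \pi^*(e_2)$ coming from the $\p^1$-bundle structure, which still lands inside \eqref{PBformula}.

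With these identifications in hand, both properties follow formally. For (1), $\widetilde\cF\circ\widetilde\cF$ acts as $\pi^*(\cF\circ\cF)\oplus H\cdot\pi^*(\cF\circ\cF) = \pi^*\bigl((-1)^g(-1)^*\bigr)\oplus H\cdot\pi^*\bigl((-1)^g(-1)^*\bigr)$ by \S\ref{Property2.1}, and since $(-1):P\rar P$ is the extended involution of Proposition \ref{extendedmult} (which covers $(-1)$ on $Jac(X')$ and fixes $\cO_P(1)$ up to the obvious action), this is exactly $(-1)^g(-1)^*$ on $A^*(P)_\Q$. For (2), given $x=\pi^*a+H\pi^*b$ and $y=\pi^*c+H\pi^*d$, the exchange formulas $\cF(x*y)=\cF x\cdot\cF y$ and $\cF(x\cdot y)=(-1)^g\cF x*\cF y$ on $Jac(X')$ (\S\ref{Property2.2}) are applied summand by summand, using that $*$ and $\cF$ on $P$ are the direct sums described above, and that the $H$-bookkeeping is consistent because $H$ is preserved by $\widetilde m$ and by the $(-1)$ map. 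I expect the only genuine obstacle to be the careful verification that pushing forward $e^{\widetilde\ell}$ along $q:P\times P\rar P$ really does commute with $\pi$ in the stated way and that the cross terms involving $H$ on both factors reorganise correctly; once the formula $\widetilde\cF=\pi^*\cF\oplus H\cdot\pi^*\cF$ is nailed down, everything else is a transcription of Beauville's identities on $Jac(X')$.

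\begin{proof}
Both properties are transported from $Jac(X')$ via $\pi:P\rar Jac(X')$. By Lemma \ref{extendedPoincare}, $\widetilde\ell=(\pi\times\pi)^*\ell$, so for $x=\pi^*a+H\cdot\pi^*b$ (using \eqref{PBformula}), the projection formula and base change along the fibre square give
$$
\widetilde\cF x = q_*\bigl(p^*x\cdot e^{(\pi\times\pi)^*\ell}\bigr)=\pi^*(\cF a)+H\cdot\pi^*(\cF b),
$$
since $q_*\circ(\pi\times\pi)^* = \pi^*\circ q'_*$ and $p^*H$ restricts to the relative hyperplane class on the first factor. Thus $\widetilde\cF$ acts as $\pi^*\cF$ on the base summand and as $H\cdot\pi^*\cF$ on the $H$-summand of \eqref{PBformula}. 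By Lemma \ref{compatiblePP}, the Pontryagin product on $A^*(P)_\Q$ likewise restricts to the Pontryagin product on $A^*(Jac(X'))_\Q$ in each summand. Property (1) now follows by applying \S\ref{Property2.1} to $a$ and to $b$, together with the observation that the extended involution of Proposition \ref{extendedmult} covers $(-1):Jac(X')\rar Jac(X')$ and preserves $\cO_P(1)$, so $\pi^*\bigl((-1)^g(-1)^* a\bigr)+H\cdot\pi^*\bigl((-1)^g(-1)^* b\bigr)=(-1)^g(-1)^* x$. Property (2) follows by applying the two exchange formulas of \S\ref{Property2.2} to $a,b,c,d$ summand by summand, the $H$-terms reorganising consistently because $H$ and any fibre of $\pi$ are preserved by $\widetilde m$ and by the $(-1)$ map (using the relation $H^2=\pi^*(e_1)H+\pi^*(e_2)$ from the $\p^1$-bundle structure to stay inside \eqref{PBformula}).
\end{proof}
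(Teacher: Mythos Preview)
Your overall strategy is exactly the paper's: use $\widetilde\ell=(\pi\times\pi)^*\ell$ from Lemma~\ref{extendedPoincare}, the projective bundle decomposition~\eqref{PBformula}, and Lemma~\ref{compatiblePP} to reduce to Beauville's identities \S\ref{Property2.1}--\S\ref{Property2.2} on $Jac(X')$. The paper's own argument is a one-line appeal to these same ingredients.

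However, the explicit formula you derive,
\[
\widetilde\cF(\pi^*a+H\cdot\pi^*b)=\pi^*(\cF a)+H\cdot\pi^*(\cF b),
\]
is incorrect, and the step that produces it fails. You invoke ``$q_*\circ(\pi\times\pi)^*=\pi^*\circ q'_*$'' as base change along a fibre square, but the square
\[
\begin{array}{ccc}
P\times P & \sta{q}{\lrar} & P\\
{\scriptstyle \pi\times\pi}\downarrow & & \downarrow{\scriptstyle\pi}\\
Jac(X')\times Jac(X') & \sta{q'}{\lrar} & Jac(X')
\end{array}
\]
is \emph{not} Cartesian: the fibre product is $Jac(X')\times P$, not $P\times P$. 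Equivalently, $q$ has relative dimension $g+1$ while $q'$ has relative dimension $g$, so the two composites shift codimension by different amounts and cannot agree. If one factors $\pi\times\pi$ through $Jac(X')\times P$ and applies the projection formula correctly, one finds instead
\[
\widetilde\cF(\pi^*a)=0,\qquad \widetilde\cF(H\cdot\pi^*b)=\pi^*(\cF b),
\]
since $(\pi\times id)_*(\pi\times id)^*=0$ on the first summand while $(\pi\times id)_*(p^*H\cdot(\pi\times id)^*(-))=(-)$ on the second. In particular $\widetilde\cF$ kills the base summand and lands entirely in it, so $\widetilde\cF\circ\widetilde\cF=0$ rather than $(-1)^g(-1)^*$, and your transcription of (1) and (2) collapses. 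The paper's sketch does not address this point either; with $\widetilde\ell$ as defined, the identities require some additional normalisation (for instance an extra factor involving $H$ in the kernel) before the reduction to $Jac(X')$ can go through.
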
 

\begin{proof}
From Lemma \ref{extendedPoincare}, we note that $e^{\widetilde{\ell}}=(\pi\times \pi)^*e^\ell$ and $\widetilde{\cF}$ is defined by this correspondence cycle. Hence, using the decomposition \eqref{PBformula}, compatibility of Pontryagin product Lemma \ref{compatiblePP}, and Properties
\eqref{Property2.1}, \eqref{Property2.2}, the assertion follows.

\end{proof}

\begin{lemma}
We have
$$
\widetilde{\cF}(A^p(P)_\Q)_{(s)}\, =\, A^{g-p+s}(P)_\Q)_{(s)}
$$
\end{lemma}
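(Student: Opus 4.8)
The plan is to reduce the statement to the corresponding Fourier property on the Jacobian $Jac(X')$, namely Property \ref{Beauvillefourier}, via the decomposition \eqref{PBformula}. By Lemma \ref{extendedPoincare} we have $e^{\widetilde{\ell}}=(\pi\times\pi)^*e^{\ell}$, so $\widetilde{\cF}$ is induced by the pullback of the Poincar\'e correspondence under $\pi\times\pi$. The key computation is therefore to unwind $\widetilde{\cF}=q_*\bigl(p^*(-)\cdot(\pi\times\pi)^*e^{\ell}\bigr)$ against the direct-sum decomposition $A^*(P)_\Q=\pi^*A^*(Jac(X'))_\Q\oplus H\cdot\pi^*A^*(Jac(X'))_\Q$, using the compatibility in Lemma \ref{eigendecomp}.

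The main steps, in order, would be: (1) Write an arbitrary class $x\in A^p(P)_{(s)}$ as $x=\pi^*a+H\cdot\pi^*b$ with $a\in A^p(Jac(X'))_{(s)}$ and $b\in A^{p-1}(Jac(X'))_{(s)}$, which is exactly Lemma \ref{eigendecomp}. (2) Apply $\widetilde{\cF}$ termwise. For the $\pi^*a$-part, use the base-change / projection-formula identity $q_*\bigl((\pi\times\pi)^*p_{X'}^*a\cdot(\pi\times\pi)^*e^{\ell}\bigr)=\pi^*q_{X'*}(p_{X'}^*a\cdot e^{\ell})=\pi^*\cF(a)$, where $p_{X'},q_{X'}$ are the projections on $Jac(X')\times Jac(X')$; here one needs that $\pi$ is flat with $\pi_*\cO_P=\cO_{Jac(X')}$ (a $\p^1$-bundle) so that the relevant push-pull square commutes and $q_*\circ(\pi\times\pi)^*$ carries the fibre class correctly. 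For the $H\cdot\pi^*b$-part, note that $H$ restricts to the relative hyperplane class on each fibre of the second projection $q:P\times P\to P$ after pulling back, so $q_*\bigl(p^*(H\cdot\pi^*b)\cdot(\pi\times\pi)^*e^{\ell}\bigr)=H\cdot\pi^*\cF(b)$ by the projection formula applied to $H$ together with the previous identity. (3) Conclude $\widetilde{\cF}(x)=\pi^*\cF(a)+H\cdot\pi^*\cF(b)$. (4) By Property \ref{Beauvillefourier}, $\cF(a)\in A^{g-p+s}(Jac(X'))_{(s)}$ and $\cF(b)\in A^{g-(p-1)+s}(Jac(X'))_{(s)}=A^{g-p+s+1}(Jac(X'))_{(s)}$, so $H\cdot\pi^*\cF(b)\in H\cdot A^{g-p+s}(Jac(X'))_\Q$ lands in codimension $g-p+s+1$ inside $P$; wait—one must be careful with the codimension bookkeeping, since $H\cdot\pi^*\cF(b)$ has codimension $(g-p+s)+1$ in $P$. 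Re-examining: the claimed target is $A^{g-p+s}(P)_\Q$, and the decomposition of that group is $A^{g-p+s}(Jac(X'))_\Q\oplus H\cdot A^{g-p+s-1}(Jac(X'))_\Q$, so I want $\cF(a)\in A^{g-p+s}(Jac(X'))_{(s)}$ and $\cF(b)\in A^{g-p+s-1}(Jac(X'))_{(s)}$. Since $b\in A^{p-1}(Jac(X'))_{(s)}$, Property \ref{Beauvillefourier} gives $\cF(b)\in A^{g-(p-1)+s}(Jac(X'))_{(s)}=A^{g-p+s+1}(Jac(X'))_{(s)}$, which is off by one; the resolution is that the correct normalization of $\widetilde{\cF}$ on the $H$-summand must incorporate a degree shift coming from $\pi_*H=1$, i.e. pushing the hyperplane class down along a fibre of $P\times P\to P$ drops codimension by one, so the effective contribution is $\cF(b)$ placed in codimension $g-p+s$ via $q_*(H\cdot(\cdots))=\pi^*\cF(b)$ with no extra $H$, and symmetrically the $\pi^*a$-summand acquires an $H$; in other words the Fourier transform swaps the two summands of \eqref{PBformula}. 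Carrying this through gives $\widetilde{\cF}(x)=H\cdot\pi^*\cF(a)+\pi^*\cF(b)$, and now $H\cdot\pi^*\cF(a)\in H\cdot A^{g-p+s-1}(Jac(X'))_{(s)}$ once we recheck that $\cF$ on $Jac(X')$ sends $A^p$ to $A^{g-p+s}$—hmm, that still gives codimension $g-p+s+1$. The honest fix is to track the $e^{\ell}$-grading and the fibre dimension simultaneously; the cleanest route is to observe that $\widetilde{\cF}$ as an operator on $A^*(P)_\Q$ is, under \eqref{PBformula}, the $2\times2$ matrix of operators whose diagonal blocks are $\cF$ on $Jac(X')$ (up to the fibre-integration shift) — and then eigenspace stability follows from Property \ref{Beauvillefourier} block by block because $\cF$ preserves the index $s$.

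Concretely, I would phrase Step (2)–(3) as the identity $\widetilde{\cF}\bigl(\pi^*a+H\,\pi^*b\bigr)=\pi^*\cF(b)+H\,\pi^*\cF(a)$, proved by the projection formula for $\pi\times\pi$ together with $(\pi\times\pi)_*\bigl(p^*H\bigr)=q^*\!\bigl(\text{unit}\bigr)$-type relations on the fibre $\p^1\times\p^1$; the degree shift by $s$ in the target is then automatic from Property \ref{Beauvillefourier} applied to $a\in A^p_{(s)}$ (giving $\cF(a)\in A^{g-p+s}_{(s)}$, hence $H\,\pi^*\cF(a)$ in codimension $g-p+s+1$ — no, in codimension $g-p+s+1$, which forces $a$'s contribution to land in the $H$-summand of $A^{g-p+s+1}(P)$, not $A^{g-p+s}(P)$). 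Given the sign and degree subtleties, the main obstacle is precisely pinning down this bookkeeping so that the two summands land in the correct graded pieces; I expect the resolution is that one computes $q_*(p^*(H\,\pi^*b)\cdot e^{\widetilde\ell})$ and finds the $H$ is absorbed by the relative fibre integration $q_*$, lowering codimension by $1$, while $q_*(p^*\pi^*a\cdot e^{\widetilde\ell})$ produces an $H$ from the top power of $\ell$ restricted to the fibre. Modulo carefully carrying out these fibre integrations on $\p^1\times\p^1$ — which is the step I would spend the most care on — eigenspace stability then follows immediately from Properties \ref{Property2.1}, \ref{Property2.2} and especially \ref{Beauvillefourier}, since $\cF$ on $Jac(X')$ preserves the subscript $(s)$ and the decomposition in Lemma \ref{eigendecomp} is $(s)$-graded.
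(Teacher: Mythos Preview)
Your approach is exactly the paper's: invoke the decomposition of Lemma \ref{eigendecomp} and reduce to Property \ref{Beauvillefourier} on $Jac(X')$. The paper's own proof is literally one sentence citing those two inputs and nothing more, so at the level of strategy you are aligned.

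Where your proposal fails to close is precisely where you try to make that reduction explicit. You correctly detect a codimension mismatch, oscillate between two candidate formulas for $\widetilde{\cF}(\pi^*a+H\,\pi^*b)$, and end by deferring the issue to ``carefully carrying out these fibre integrations.'' That deferred step is the entire content, so as written this is not a proof. Moreover the discrepancy you flagged is not a bookkeeping slip: with $\widetilde{\ell}=(\pi\times\pi)^*\ell$ one has $q_*\bigl((\pi\times\pi)^*w\bigr)=0$ for every class $w$ on $Jac(X')^2$ (factor $q$ as $pr_2\circ(\pi\times\mathrm{id}_P)$ and use that $(\pi\times\mathrm{id}_P)_*(\pi\times\mathrm{id}_P)^*=0$ for a $\p^1$-bundle), hence $\widetilde{\cF}(\pi^*a)=0$; and the projection formula gives $\widetilde{\cF}(H\cdot\pi^*b)=\pi^*\cF(b)\in A^{g-p+1+s}(P)_{(s)}$. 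So $\widetilde{\cF}$ as defined neither hits the $H$-summand nor lands in the stated codimension $g-p+s$. Your instinct that something was off is correct; the fix is not ``more care on $\p^1\times\p^1$'' but a re-examination of the statement or of the normalisation of $\widetilde{\cF}$, which the paper's one-line argument does not supply either.
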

\begin{proof}
Use the decomposition in Lemma \ref{eigendecomp}  and apply Proposition \ref{Beauvillefourier}. 

\end{proof}

\section{The tautological ring $\widetilde{R}$ of $P$}\label{finaltheorem}

As in \cite{Beauville2}, consider the \textit{tautological} subring $\widetilde{\cR} \subset A(P)_\Q$ which is the smallest $\Q$-subalgebra containing the classes $\widetilde{W}_i$, $1 \leq i \leq g$, $S_y$ and $c_1(\cO_P(1))$, and stable under the pullback maps $\textbf{n}^*$, pushforward maps $\textbf{n}_*$, closed under the intersection product and the Pontryagin product $*$.

\begin{theorem}
The $\Q$-algebra $\widetilde{\cR}$ is generated by the classes $\pi^{-1}W_i,\,1\leq i\leq g-1$, $S_y$ and $c_1(\cO_P(1))$.
\end{theorem}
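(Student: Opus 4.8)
The plan is to prove the two inclusions $\cA\subseteq\widetilde{\cR}$ and $\widetilde{\cR}\subseteq\cA$, where I write $\cA$ for the $\Q$-subalgebra of $A^*(P)_\Q$ generated, under the intersection product, by $\pi^{-1}W_i$ ($1\le i\le g-1$), by $S_y$, and by $H:=c_1(\cO_P(1))$. The whole argument runs through the reindexed form of Lemma \ref{genceresa},
$$
\widetilde{W}_i \;=\; \pi^{-1}W_i\cdot S_y \,+\, \pi^{-1}W_{i-1},\qquad 1\le i\le g,
$$
which I abbreviate as $(\dagger)$ (with the convention $\pi^{-1}W_g=[P]$, so that $(\dagger)$ recovers \eqref{extendedthetaclass} at $i=g$), together with the compatibility Lemmas \ref{eigendecomp} and \ref{compatiblePP} and Beauville's structural Theorem \ref{Beauville} for $Jac(X')$.

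For the inclusion $\cA\subseteq\widetilde{\cR}$ I note first that $S_y$ and $H$ are by definition generators of $\widetilde{\cR}$, so it remains only to place each $\pi^{-1}W_i$ inside $\widetilde{\cR}$. I would do this by downward induction on $i$ using $(\dagger)$. At $i=g$, relation \eqref{extendedthetaclass} gives $\pi^{-1}W_{g-1}=\widetilde{W}_g-S_y\in\widetilde{\cR}$. For the inductive step I rewrite $(\dagger)$ as $\pi^{-1}W_{i-1}=\widetilde{W}_i-\pi^{-1}W_i\cdot S_y$; since $\widetilde{W}_i$ and $S_y$ lie in $\widetilde{\cR}$, which is closed under the intersection product, $\pi^{-1}W_i\in\widetilde{\cR}$ forces $\pi^{-1}W_{i-1}\in\widetilde{\cR}$. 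This yields $\pi^{-1}W_i\in\widetilde{\cR}$ for all $0\le i\le g-1$, and hence $\cA\subseteq\widetilde{\cR}$.

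For the reverse inclusion $\widetilde{\cR}\subseteq\cA$, since $\widetilde{\cR}$ is the \emph{smallest} stable subalgebra containing the classes $\widetilde{W}_i,S_y,H$, it suffices to check that $\cA$ contains those generators and is stable under $\textbf{n}^*$, $\textbf{n}_*$, the intersection product and $*$. Membership is clear for $S_y$ and $H$; for $\widetilde{W}_i$ it follows from $(\dagger)$ once one observes $\pi^{-1}W_g=1$ and, using Poincar\'e's formula together with the fact that $\pi^*$ is a ring map, $\pi^{-1}W_0=\tfrac{1}{g!}(\pi^{-1}W_{g-1})^g\in\cA$. Stability under the intersection product holds by construction. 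For $\textbf{n}^*$ and $\textbf{n}_*$ I would show that $\cA$ is spanned by its intersections with the eigenspaces $A^k(P)_{(s)}$: the intersection product is homogeneous for the second grading, so it is enough that each generator be a sum of eigen-components already lying in $\cA$. For $\pi^{-1}W_i$ this comes from Theorem \ref{Beauville} (the ring $\cR$ is bigraded, with eigen-components again in $\cR$, and $\pi^*\cR\subseteq\cA$); $H$ is itself an eigenvector; and $S_y$ is treated in the final paragraph. Once $\cA$ is eigen-graded, $\textbf{n}^*$ and $\textbf{n}_*$ act as scalars on each piece (Lemma \ref{eigendecomp}) and so preserve $\cA$. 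For the Pontryagin product I invoke Lemma \ref{compatiblePP}: under the decomposition \eqref{PBformula}, $*$ reduces to the product $m'_*$ on $Jac(X')$, which preserves $\cR$ by Theorem \ref{Beauville}, while $H$ and the fibre class are respected; this sends $\cA$ into $\cA$.

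The main obstacle, and the only genuine computation, is the behaviour of $\textbf{n}^*$, $\textbf{n}_*$ and $*$ on the fibre-direction classes $H$ and $S_y$, which are not pulled back from the base. Here I would use that $S_y$ and $S_z$ are precisely the two sections fixed by $\textbf{n}$ (the maps $a\mapsto a^n$ fixing $0,\infty$ on each fibre), that $\textbf{n}$ has fibrewise degree $n$ so that $\textbf{n}^*H=nH$, and the Grothendieck relation for the $\p^1$-bundle. The key simplification is that the base (Chern-class) contributions entering $[S_y]$ and $H^2$ are first Chern classes of line bundles in $\mathrm{Pic}^0(Jac(X'))$, hence algebraically trivial; modulo algebraic equivalence this gives $[S_y]=H$ and $H^2=0$ in $A^*(P)_\Q$, collapsing all fibre-direction bookkeeping and placing the eigen-components of $S_y$ in $\cA$. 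With these identities in hand the verifications of the previous paragraph close up, and combining both inclusions gives $\widetilde{\cR}=\cA$, exactly the asserted generation by $\pi^{-1}W_i$ ($1\le i\le g-1$), $S_y$ and $c_1(\cO_P(1))$.
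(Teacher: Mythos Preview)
Your argument follows essentially the same route as the paper's: show that the subalgebra generated by $\pi^*\cR$, $S_y$, and $H$ contains the defining classes $\widetilde{W}_i$ and is preserved by $\textbf{n}^*$, $\textbf{n}_*$, and $*$, then invoke the minimality of $\widetilde{\cR}$ together with Beauville's Theorem~\ref{Beauville}. The paper does exactly this, only more tersely (``it is now straightforward to check\ldots'').

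Where you go beyond the paper is useful. First, you make the inclusion $\cA\subseteq\widetilde{\cR}$ explicit via the downward induction on $(\dagger)$, which the paper leaves implicit. Second, and more substantively, your observation that the $\p^1$-bundle $P=\p(\cL_y\oplus\cL_z)$ has $\cL_y,\cL_z\in\mathrm{Pic}^0(Jac(X'))$, so that modulo algebraic equivalence $[S_y]=H$ and $H^2=0$, is correct and is not spelled out in the paper; it in fact reconciles the statement you are proving with the version announced in the Introduction (where $S_y$ is omitted from the list of generators). Your justification for $\pi^{-1}W_0\in\cA$ is also fine once one notes that any two points on an abelian variety are algebraically equivalent, so $A^g(Jac(X'))_\Q\cong\Q$ and the degree identity $\theta^g=g!\,[\mathrm{pt}]$ holds there.

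One small caution: the sentence ``$H$ is itself an eigenvector'' and the fibrewise formula $\textbf{n}^*H=nH$ deserve care, since for $n<0$ the fibre map $a\mapsto a^n$ swaps $0$ and $\infty$ (so $(-1)^*S_y=S_z$) and has degree $|n|$. This is harmless for your purposes because, by your own identity $[S_y]=[S_z]=H$ and $H^2=0$, one still gets $\textbf{n}^*H$ and $\textbf{n}_*H$ lying in $\Q\cdot H\subset\cA$; but it is exactly the kind of detail the paper's Lemma~\ref{eigendecomp} (proved as ``Clear'') sweeps under the rug, so it is worth stating precisely rather than asserting a specific eigenvalue.
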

\begin{proof}
We just need to note that $\widetilde{\cR}$ is generated by the $\Q$-subalgebra $\cR$ and the 
classes $S_y$, $c_1(\cO_P(1))$. Indeed, the maps $\textbf{n}_*$, $\textbf{n}^*$ induced by multiplication by $\textbf{n}$, preserve $\cR$ and the classes
$S_y$, $c_1(\cO_P(1))$, by Proposition \ref{extendedmult}. 
Similarly, it is now straightforward to check that the Pontryagin product and Fourier transform preserve the $\Q$-subalgebra $<\cR, S_y,c_1(\cO_P(1))>$.
By Theorem \ref{Beauville}, the assertion follows.
\end{proof}


\begin{thebibliography}{AAAAA}

\bibitem[Be]{Beauville} Beauville, A. {\em Sur l\`anneau de Chow d\`une vari\'et\'e ab\'elienne.} (French) [The Chow ring of an abelian variety]  Math. Ann.  273  (1986),  no. \textbf{4}, 647--651.

\bibitem[Be2]{Beauville2} Beauville, A. {\em Algebraic cycles on Jacobians}, Compos. Math.  140  (2004),  no. \textbf{3}, 683--688. 

\bibitem[Bh]{Bhosle} Bhosle, U.N. {\em Generalised parabolic bundles and applications to torsion free sheaves on nodal curves}, Arkiv Mat. \textbf{30} (2), 187-215  (1992).

\bibitem[Bh-Pa]{BhosleParam} Bhosle, U.N., Parameswaran, A.J. {\em On the Poincar\'e formula and the Riemann singularity theorem over nodal curves}, Math. Annalen,  342,  no. \textbf{4}, 885--902, 2008.

\bibitem[Ce]{Ceresa} Ceresa, G. {\em $C$ is not algebraically equivalent to $C^{-}$ in its Jacobian.}  Ann. of Math. (2)  117  (1983), no. \textbf{2}, 285--291.

\bibitem[Fu]{Fulton} Fulton, W. {\em Intersection theory}, Second edition.
 Ergebnisse der Mathematik und ihrer Grenzgebiete. 3. Folge., 2.
 Springer-Verlag, Berlin, 1998. xiv+470 pp.

\bibitem[Od-Se]{OdaSeshadri} Oda, T., Seshadri, C. S. {\em Compactifications of the generalized Jacobian variety.} Trans. Amer. Math. Soc. \textbf{253} (1979), 1--90.

\end {thebibliography}

\end{document}